\newtheorem{theorem}{\hskip\parindent\bf Theorem}[section]
\newtheorem{lemma}{\hskip\parindent \bf Lemma}[section]
\newtheorem{definition}{\hskip\parindent\bf Definition }[section]
\newtheorem{remark}{\hskip\parindent\bf Remark}[section]
\def\bc{\begin{center}}
	\def\ec{\end{center}}
\numberwithin{equation}{section}{}
\def\bvarphi{\mbox{\boldmath$\varphi$}}
\newcommand{\ba}{\begin{array}}\newcommand{\ea}{\end{array}}
\newcommand{\be}{\begin{eqnarray}}\newcommand{\ee}{\end{eqnarray}}
\newcommand{\bex}{\begin{eqnarray*}}
	\newcommand{\eex}{\end{eqnarray*}}
\begin{document}
	
	\title[Second Order Reduced Model via Incremental Projection for Navier Stokes]
	{Second Order Reduced Model via Incremental Projection for Navier Stokes}
	
	\author{Y. Guo}
	\thanks{Y. Guo: School of Mathematical Sciences and Fujian Provincial Key Laboratory of Mathematical Modeling and High Performance Scientific Computing, Xiamen University, 361005 Xiamen, China. {\tt yayuguo@stu.xmu.edu.cn}}
	
	\author{C. N\'u\~nez Fern\'andez}
	\thanks{C. N\'u\~nez Fern\'andez: Departamento EDAN \& IMUS, Universidad de Sevilla, Spain. {\tt cnfernandez@us.es}}
	
	\author{S. Rubino}
	\thanks{S. Rubino: Departamento EDAN \& IMUS, Universidad de Sevilla, Spain. {\tt samuele@us.es}}
	
	\author{C. Xu}
	\thanks{C. Xu: School of Mathematical Sciences and Fujian Provincial Key Laboratory of Mathematical Modeling and High Performance Scientific Computing, Xiamen University, 361005 Xiamen, China. {\tt cjxu@xmu.edu.cn}}

	\keywords {Reduced order model; Stokes equation; Error estimate.}
	
	\subjclass[2010]{65N30}
	
	\date {\today}
	
	\begin{abstract}
		The numerical simulation of incompressible flows is challenging due to the tight coupling of velocity and pressure. Projection methods offer an effective solution by decoupling these variables, making them suitable for large-scale computations. This work focuses on reduced-order modeling using incremental projection schemes for the Stokes equations. We present both semi-discrete and fully discrete formulations, employing BDF2 in time and finite elements in space. A proper orthogonal decomposition (POD) approach is adopted to construct a reduced-order model for the Stokes problem. The method enables explicit computation of reduced velocity and pressure while preserving accuracy. We provide a detailed stability analysis and derive error estimates, showing second-order convergence in time. Numerical experiments are conducted to validate the theoretical results and demonstrate computational efficiency.
	\end{abstract}
	\maketitle
	\section{Introduction}
	Numerical simulation of incompressible flows presents a significant challenge due to the coupling of velocity and pressure imposed by the incompressibility constraint. Projection methods emerged in the late 1960s as a promising solution to this problem, pioneered by the groundbreaking work of Chorin and Temam \cite{Chorin,Temam}. These methods are particularly appealing because they allow the decoupling of velocity and pressure computations at each time step, reducing the problem to a series of independent elliptic equations. This decoupling makes projection methods highly efficient for large-scale numerical simulations. 
	
	Although projection methods fall under the broader category of fractional or splitting step approaches, the standard methodologies for fractional step methods (e.g., Yanenko \cite{Yanenko} or Glowinski [\cite{Glowinski}, Chap. II]) do not apply directly. This is because pressure is not a dynamic variable, and the Navier–Stokes equations do not conform to the Cauchy–Kovalevskaya form. Consequently, developing and analyzing higher-order projection methods has been a non-trivial task. Over the past  years, extensive research has been conducted to construct, analyze, and refine projection schemes, with considerable effort devoted to finding optimal approaches (see, e.g., Glowinski [\cite{Glowinski}, Chap. VII]). 
	
	Although many valuable insights have been gained, the field has also had its share of erroneous or misleading claims. However, recent advances have addressed several long-standing theoretical and practical challenges, leading to a clearer understanding of projection methods. Recognizing this progress, Guermond and collaborators (see \cite{guermond2006overview}) have compiled a comprehensive overview, presenting the best approximation results for various classes of projection schemes, and addressing critical implementation issues that are rarely discussed in the literature.
	Despite their advantages, relatively little research has focused on reduced-order methods using time-splitting techniques for unsteady Stokes or incompressible Navier-Stokes problems, contrary for instance to stabilized methods, see e.g. \cite{samu,novo2021error,rebollo2022error,garcia2024pod}. Notable exceptions include the work of Li et al. \cite{li2022efficient}, who proposed a reduced-order model (ROM) based on proper orthogonal decomposition (POD-ROM) for unsteady Stokes equations. Their approach combines the classical Chorin-Temam projection method with POD techniques, decoupling the reduced velocity and pressure variables. They circumvented the need to verify the classical LBB/inf-sup condition in mixed reduced spaces by using a stabilized pressure projection of the Petrov-Galerkin (PSPG) type. More recently, Azaiez et al. (see \cite{Azaiez2025}) have presented a reduced-order method based on the Goda time-splitting scheme (standard incremental) \cite{goda1979multistep}, an improvement over the classical Chorin-Temam pressure correction method. Their approach computes POD pressure modes using an inner product that not only matches the pressure regularity in full-order finite element (FE) solutions, but also allows fully explicit computation for reduced velocity and pressure. This framework improves both consistency and computational efficiency.
	
	In this paper, we propose to consider incremental projection methods to approximate the reduced-order Navier-Stokes equation (see \cite{guermondCRAS1997}). The advantage of this method is the following, the error on the velocity in the semi-discret norm $l^2(L^2(\Omega)^d)$ (discret in time and continuous in space) is of $\mathcal{O}(\delta t^2+h^{l+1})$, where $l\ge 1$ is the polynomial degree of the velocity approximation. One can read the proof in \cite{guermondCRAS1997} where it is also shown that the splitting error of the projection schemes, based on the incremental pressure correction, is $\mathcal{O}(\delta t^2)$ even if the application of the time derivative of the velocity is $\mathcal{O}(\delta t)$.
	
	The paper is devoted to the proposal of a reduced-order method to the incremental projection scheme. It is organised as follows : in section 2, we give semi-discrete and fully discrete schemes respectively, using the BDF2 scheme in time and the finite element scheme in space.
	In section 3, we mainly introduce POD method and construct a reduced order model about Stokes equation. Meanwhile, we give the corresponding stability analysis and error estimation. In the last section, some numerical examples are calculated to verify the theoretical analysis.

	\section{Second order incremental scheme}
	
	The unsteady Stokes equation   can be expressed as follows
	\begin{subequations}\label{stokes1}
		\begin{align}
			\mathbf{u}_t  - \nu \Delta \mathbf{u} + \nabla p & =f, \quad \text{in}~ \Omega\times]0, T],\\
			\nabla \cdot \mathbf{u} & =0, \quad \text{in}~ \Omega\times]0, T],\\
			\mathbf{u}|_{\partial\Omega} & =0, \quad \text{in}~]0, T],\\
			\mathbf{u}|_{t=0} & =\mathbf{u}^{0}, \quad \text{in}~ \Omega,
		\end{align}
	\end{subequations}
	where $\Omega$ is a bounded domain with a sufficiently smooth boundary $\partial\Omega$, $\mathbf{u}$ denotes the velocity and $p$ stands for the pressure.

	We construct the second-order schemes in time, and use the finite element method in space. 
	
	\subsection{Semi-discrete scheme}
We start by describing the time discretization of \eqref{stokes1} adopted in this work.
	Let $\tau > 0$ be the time step and set $t^k = k \tau$ for $0 \leq k \leq N_t = [T/\tau]$.\\
	Let us take a quick look at the scheme required for this work (see for more details \cite{guermondCRAS1997,guermond2006overview}).\\
	
	\medskip
	We consider the two-step backward differentiation formula (BDF-2): for $n \geq 1$. \\[2mm]
	{\bf{Step} 1}: Calculate $\tilde{\mathbf{u}}^{n+1}$ by
	\begin{subequations}\label{semi_bdf_1}
		\begin{align}
			\frac {3\tilde{\mathbf{u}}^{n+1} - 4\mathbf{u}^n+\mathbf{u}^{n-1}} {2\tau} - \nu \Delta \tilde{\mathbf{u}}^{n+1} + \nabla p^{n} & =f^{n+1},\\
			\tilde{\mathbf{u}}^{n+1}|_{\partial\Omega}&=0.
		\end{align}
	\end{subequations}
	{\bf{Step} 2}: Compute $\phi^{n+1}$ by
	\begin{subequations}\label{semi_bdf_2}
		\begin{align}
			\frac {3\mathbf{u}^{n+1}-3\tilde{\mathbf{u}}^{n+1}} {2\tau} + \nabla \phi^{n+1}&=0,\\
			\nabla\cdot \mathbf{u}^{n+1}&=0,\\
			\mathbf{u}^{n+1}\cdot  \mathbf{n}&=0,
		\end{align}
	\end{subequations}
	where we take $\phi^{n+1}=p^{n+1}-p^n$. From \eqref{semi_bdf_2}, we recover the velocity as  
	\bex
	\mathbf{u}^{n+1} &=  \tilde{\mathbf{u}}^{n+1} - \frac{2}{3}\tau \nabla (p ^{n+1}-p^{n}).
	\eex
	Similarly, at $t=t^n$ time step, we can derive
	\bex
	\mathbf{u}^{n} &=  \tilde{\mathbf{u}}^{n} - \frac{2}{3}\tau \nabla (p ^{n}-p^{n-1}).
	\eex
	Meanwhile, we have the following result at $t^{n-1}$ time step
	\bex
	\mathbf{u}^{n-1} &=  \tilde{\mathbf{u}}^{n-1} -\frac{2}{3} \tau \nabla (p ^{n-1}-p^{n-2}).
	\eex
	Substituting the above equations into \eqref{semi_bdf_1}, and going to the Step 2, the final scheme is reformulated as: \newline 
    For $n \geq 1$, given $\mathbf{\tilde{u}^0}, \mathbf{\tilde{u}^1},p^0,p^1$ and $p^{-1}$ we compute $\mathbf{\tilde{u}^{n+1}}$ by: 
	\begin{subequations}\label{semi_bdf_3}
		\begin{align}
			\frac {3\tilde{\mathbf{u}}^{n+1} - 4\tilde{\mathbf{u}}^n+\tilde{\mathbf{u}}^{n-1}} {2\tau} - \nu \Delta \tilde{\mathbf{u}}^{n+1} + \frac{1}{3}\nabla(7 p^{n}-5 p^{n-1} +p^{n-2} )& =f^{n+1},\\
			\tilde{\mathbf{u}}^{n+1}|_{\partial\Omega}&=0.
		\end{align}
	\end{subequations}
	For the second step, we apply the divergence operator to the equation \eqref{semi_bdf_2}, which can be rewritten as
	\be\label{semi_bdf_5}
	-\nabla\cdot\tilde{\mathbf{u}}^{n+1} + \frac{2\tau}{3} \Delta \phi^{n+1}=0,
	\ee
	where $\phi^{n+1}=p^{n+1}-p^n$.
	So, at the first step we compute $\tilde{\mathbf{u}}^{n+1}$, and then substituting it in \eqref{semi_bdf_5}, we can get $\phi^{n+1}$. Finally, we update $p^{n+1}$ and $\mathbf{u}^{n+1}$ by:
    \begin{align}
    p^{n+1} & =\phi^{n+1}+p^n,  \nonumber \\
    \mathbf{u}^{n+1} & = \mathbf{\tilde{u}}^{n+1} - \frac{2}{3} \tau \nabla \phi^{n+1}.
    \end{align}
    To initialize the BDF2 scheme \eqref{semi_bdf_3}, we compute the first time step using a first-order backward differentiation formula (BDF1), as commonly done in fractional-step methods, see e.g. \cite{haferssas2018efficient}. Given the initial conditions $\tilde{\mathbf{u}}^{0} = \mathbf{u}^0$ and $p^{0}$ (with the convention $p^{-1}=p^{0}$), we first compute $\tilde{\mathbf{u}}^{1}$ from
\begin{subequations}\label{semi_bdf_init}
\begin{align*}
\frac{\tilde{\mathbf{u}}^{1}-\tilde{\mathbf{u}}^{0}}{\tau}
-\nu \Delta \tilde{\mathbf{u}}^{1} + \nabla p^{0} &= f^{1},\\
\tilde{\mathbf{u}}^{1}\big|_{\partial\Omega} &= 0,
\end{align*}
\end{subequations}
and then obtain $\phi^{1}$ from
\begin{equation*}\label{semi_bdf_init_poisson}
-\nabla\cdot\tilde{\mathbf{u}}^{1} + \tau \Delta \phi^{1}=0,
\qquad \phi^{1}=p^{1}-p^{0}.
\end{equation*}
Finally, we update
\begin{equation*}\label{semi_bdf_init_update}
p^{1}=p^{0}+\phi^{1},
\qquad 
\mathbf{u}^{1}=\tilde{\mathbf{u}}^{1}-\tau \nabla \phi^{1}.
\end{equation*}
\begin{remark}\label{rem:start-up}
Since the first time step is computed with a BDF1 initialization, the overall scheme is not globally second-order accurate at the very first iterations. For this reason, when assessing temporal convergence, we compute the order after the start-up phase, discarding the first time steps and measuring errors only on the time interval where the BDF2 scheme is fully initialized.
\end{remark}

	\subsection{Fully discrete scheme}\label{FullyDiscrete}
	Let $\mathcal{T}_h=\{K\}$ be the quasiuniform triangulation of $\Omega$. Define the finite element spaces 
	$$
	\begin{aligned}
        & U_h=\left\{\boldsymbol{v}_h \in \mathbf{H}_0(div,\Omega) \cap C(\bar{\Omega})^2:\left.v_h\right|_K \in \mathcal{P}_2^2(K), \forall K \in \mathcal{T}_h\right\}, \\
		& \tilde{U}_h=\left\{\boldsymbol{v}_h \in \mathbf{H}_0^1(\Omega) \cap C(\bar{\Omega})^2:\left.v_h\right|_K \in \mathcal{P}_2^2(K), \forall K \in \mathcal{T}_h\right\}, \\
		& Q_h=\left\{q_h \in L_0^2(\Omega):\left.q_h\right|_K \in \mathcal{P}_1(K), \forall K \in \mathcal{T}_h\right\},
	\end{aligned}
	$$
	where 
    $$
    \mathbf{H}_0^1(div,\Omega) = \{ v \in \mathbf{L}^2(\Omega) \, : \, \nabla \cdot v \in L^2(\Omega), \, \mathbf{v} \cdot \mathbf{n} = 0 \text{ on } \partial \Omega \},
    $$
    and $\mathcal{P}_1(K)$ is the space of polynomials of degree not exceeding 1.
	The fully discrete scheme is formulated as below: \newline 
 For $n = 0,$ obtain $\tilde{\mathbf{u}}_{h}^{1} \in \tilde{U}_h$ by solving 
		 \begin{equation*}
			 \Big(\frac {\tilde{\mathbf{u}}_{h}^{1} - \tilde{\mathbf{u}}_{h}^0} {\tau},\mathbf{\tilde{v}}_{h}\Big) + \nu (\nabla \tilde{\mathbf{u}}_{h}^{1},\nabla\mathbf{\tilde{v}}_{h}) + (\nabla p^0,\mathbf{\tilde{v}}_{h})
			  =(f^{n+1},\mathbf{\tilde{v}}_{h}),~~ \forall \mathbf{\tilde{v}}_{h}\in \tilde{U}_h,
        \end{equation*}
    and then obtain $\phi_h^1 \in Q_h$ from
        \begin{equation*}
			   (\nabla\cdot \tilde{\mathbf{u}}_{h}^{1}, q_h) + \tau(\nabla \phi_{h}^{1},\nabla q_h) =0, ~~\forall q_{h}\in Q_h.
	   \end{equation*}
       Finally update $p^1_h \in Q_h$ and $\mathbf{u}_h^1 \in U_h$ by solving
       \begin{align}
           (p^1_h,q_h) & = (\phi^1 + p^0,q_h) ~~\forall q_{h}\in Q_h, \\
           (\mathbf{u}_h^{1}, \mathbf{v}_h) & = (\mathbf{\tilde{u}}_h^1, \mathbf{v}_h) - \tau (\nabla \phi^1, \mathbf{v}_h) ~~ \forall \mathbf{v}_{h}\in U_h.
       \end{align}
	 Here we set $p_{h}^{0}=p_{h}^{-1}$, and $\tilde{\mathbf{u}}_{h}^{0}=\mathbf{u}_{h}^{0}$, where $\mathbf{u}^0_h$ is taken as a stable approximation to $\mathbf{u}_0$ in $\mathbf{L}^2-$norm.  \newline
	For $n\geq1$, find $\tilde{\mathbf{u}}_{h}^{n+1}\in \tilde{U}_h$ and $p_{h}^{n+1}\in Q_h$, such that 
	\begin{subequations}\label{fom_bdf_1}
		\begin{align}
			\Big(\frac {3\tilde{\mathbf{u}}_{h}^{n+1} - 4\tilde{\mathbf{u}}_{h}^n+\tilde{\mathbf{u}}_{h}^{n-1}} {2\tau},\tilde{\mathbf{v}}_{h}\Big)
			& + \nu (\nabla \tilde{\mathbf{u}}_{h}^{n+1},\nabla\tilde{\mathbf{v}}_{h}) \nonumber \\
			& + \frac{1}{3}(\nabla (7p_{h}^{n}-5p_{h}^{n-1}+p_{h}^{n-2}),\tilde{\mathbf{v}}_{h})
			=(f^{n+1},\tilde{\mathbf{v}}_{h}),~~ \forall \tilde{\mathbf{v}}_{h}\in \tilde{U}_h,\\
			(\nabla\cdot \tilde{\mathbf{u}}_{h}^{n+1}, q_h) &+ \frac{2 \tau}{3} (\nabla \phi_{h}^{n+1},\nabla q_h)=0, ~~\forall q_{h}\in Q_h,
		\end{align}
	\end{subequations}
then we update $p^{n+1}_h \in Q_h$ and $\mathbf{u}^{n+1}_h \in U_h$ by solving
    \begin{align}
    (p_{h}^{n+1},q_h) & = (\phi_{h}^{n+1}+p_{h}^n,q_h) ~~\forall q_h \in Q_h  \label{fom_bdf_2a}\\
     (\mathbf{u}_h^{n+1}, \mathbf{v}_h) & = (\mathbf{\tilde{u}}_h^{n+1}, \mathbf{v}_h) - \frac23\tau (\nabla \phi^{n+1}, \mathbf{v}_h) ~~ \forall \mathbf{v}_{h}\in U_h. \label{fom_bdf_2b}
    \end{align}
    
	\section{The POD reduced order model}\label{ROMScheme}
	In this section, we introduce the POD method briefly. This method can save computing costs by replacing global basis functions with reduced basis. The detailed presentation can be referred to \cite{kunisch2001galerkin}.
	
	\subsection{The proper orthogonal decomposition}
	Define $\mathcal{V},\tilde{\mathcal{V}}$ and $\mathcal{P}$ as ensembles of snapshots, i.e. $\mathcal{V}=\mathrm{span}\{\mathbf{u}_1, \mathbf{u}_2,\cdots,\mathbf{u}_{N_t+1}\},\tilde{\mathcal{V}}=\mathrm{span}\{\tilde{\mathbf{u}}_1, \tilde{\mathbf{u}}_2,\cdots,\tilde{\mathbf{u}}_{N_t+1}\}$ and  $\mathcal{P}=\mathrm{span}\{p_1, p_2,\cdots,p_{N_t+1}\}$
	where
	$\mathbf{u}_i=\mathbf{u}_h^{n_0 + i-1}(x,y)~~( 1 \leq i \leq N_t+1),$ $\tilde{\mathbf{u}}_i=\tilde{\mathbf{u}}_h^{n_0 + i-1}(x,y)~~( 1 \leq i \leq N_t+1)$ and $p_i=p_h^{n_0+i-1}(x,y)~~( 1 \leq i \leq N_t+1)$, where $n_0 \geq 2$ denotes the first time level at which the full-order model is advanced using the BDF2 scheme. The goal of the POD method is to seek three orthogonal bases $\{\bvarphi_1,\bvarphi_2,\cdots,\bvarphi_{N_u}\}$,$\{\tilde{\bvarphi}_1,\tilde{\bvarphi}_2,\cdots,\tilde{\bvarphi}_{N_{\tilde{u}}}\}$ and $\{\psi_1,\psi_2,\cdots,\psi_{N_p}\}$ to optimize the following minimization problems
	\begin{align}
        &\min_{\{\mathbf{\varphi}_j\}_{j=1}^{N_u}}\frac{1}{N_t+1}\sum_{i=1}^{N_t+1}\|\mathbf{u}_i-\sum_{j=1}^{N_u}(\mathbf{u}_{i}, \bvarphi_{j})_{X} \bvarphi_{j}\|_{X}^2, \label{pod1}\\
		&\min_{\{\mathbf{\tilde{\varphi}}_j\}_{j=1}^{N_{\tilde{u}}}}\frac{1}{N_t+1}\sum_{i=1}^{N_t+1}\|\tilde{\mathbf{u}}_i-\sum_{j=1}^{N_{\tilde{u}}}(\tilde{\mathbf{u}}_{i}, \tilde{\bvarphi}_{j})_{X} \tilde{\bvarphi}_{j}\|_{X}^2, \label{pod2}\\
		&\min_{\{\psi_j\}_{j=1}^{N_p}}\frac{1}{N_t+1}\sum_{i=1}^{N_t+1}\|p_i-\sum_{j=1}^{N_p}(p_{i}, \psi_{j})_{Y} \psi_{j}\|_{Y}^2, \label{pod3}
	\end{align}
	subject to $(\bvarphi_i,\bvarphi_j)_X=\delta_{ij}, 1\leq i,j\leq N_u,$ $(\tilde{\bvarphi}_i,\tilde{\bvarphi}_j)_X=\delta_{ij}, 1\leq i,j\leq N_{\tilde{u}}$ and $(\psi_i,\psi_j)_Y=\delta_{ij}, 1\leq i,j\leq N_p$, where $\delta_{ij}$ is the Kronecker delta, and $X,Y$ are general Hilbert spaces with their respective inner product.
	The solutions of \eqref{pod1}, \eqref{pod2} and \eqref{pod3} are given by solving the eigenvalue problems:
	\begin{align}
        &\mathcal{K}^{\mathbf{u}}\bvarphi_i^{*}= \lambda_i \bvarphi_i^{*}, \quad i=1,2,\cdots, N_t+1, \nonumber\\
		&\tilde{\mathcal{K}}^{\tilde{\mathbf{u}}} \tilde{\bvarphi}_i^{*}= \lambda_i\tilde{\bvarphi}_i^{*}, \quad i=1,2,\cdots, N_t+1, \nonumber\\
		&\mathcal{K}^{p}\tilde{\psi}_i= \lambda_i\tilde{\psi}_i, \quad i=1,2,\cdots, N_t+1,
		\nonumber
	\end{align}
	in which $\mathcal{K}^{\mathbf{u}}=(K^{\mathbf{u}}_{i j}) \in R^{(N_t+1) \times (N_t+1)}$, $\tilde{\mathcal{K}}^{\tilde{\mathbf{u}}}=(\tilde{K}^{\tilde{\mathbf{u}}}_{i j}) \in R^{(N_t+1) \times (N_t+1)}$ and $\mathcal{K}^{p}=(K^{p}_{i j}) \in R^{(N_t+1) \times (N_t+1)}$ denote the correlation matrices, $K^{\mathbf{u}}_{i j}=\frac{1}{N_t+1}( \mathbf{u}_{i},  \mathbf{u}_{j})_X$, $\tilde{K}^{\tilde{\mathbf{u}}}_{i j}=\frac{1}{N_t+1}( \tilde{\mathbf{u}}_{i},  \tilde{\mathbf{u}}_{j})_X$ and  $K^{p}_{i j}=\frac{1}{N_t+1}( p_{i},  p_{j})_Y$,  $\mathbf{\varphi}_i^*,$ $\mathbf{\tilde{\varphi}}_i^*$ and $\tilde{\psi}_i$ are the eigenvector corresponding to the eigenvalue $\lambda_i,$ $\tilde{\lambda_i}$ and $\sigma_i$ respectively, with $\lambda_1 \geq \cdots \geq \lambda_{N_t+1}\geq 0,$ $\tilde{\lambda}_1 \geq \cdots \tilde{\lambda}_{N_t +1 } \geq 0$ and $\sigma_1 \geq \cdots \geq \sigma_{N_t+1} \geq 0.$
	Furthermore, the POD basis functions of rank $N_u,N_{\tilde{u}},N_p \leq{N_t+1}$ are given by the formula
	\begin{align}
        &\bvarphi_i= \frac{1}{\sqrt{(N_t+1)\lambda_i}}\sum_{k=0}^{N_t+1}(\bvarphi_i^*)_k\mathbf{u}_k, \quad i=1,2,\cdots, N_u,
		\nonumber\\
		&\tilde{\bvarphi}_i= \frac{1}{\sqrt{(N_t+1)\tilde{\lambda}_i}}\sum_{k=0}^{N_t+1}(\tilde{\bvarphi}_i^*)_k\tilde{\mathbf{u}}_k, \quad i=1,2,\cdots, N_{\tilde{u}},
		\nonumber\\
		&\psi_i= \frac{1}{\sqrt{(N_t+1)\sigma_i}}\sum_{k=0}^{N_t+1}(\tilde{\psi}_i)_kp_k, \quad i=1,2,\cdots, N_p,
		\nonumber
	\end{align}
	where $(\bvarphi_i^*)_k$, $(\tilde{\bvarphi}_i^*)_k$ and $(\tilde{\psi}_i)_k$ denote the $k$-th component of $\bvarphi_i^*$, $\tilde{\bvarphi}_i^*$ and $\tilde{\psi}_i$, respectively.
	Then the errors of POD method are given by
	\begin{align}\label{projerror}
		&\frac{1}{N_t+1} \sum_{i=1}^{N_t+1}\|\mathbf{u}_{i}-\sum_{j=1}^{N_u}(\mathbf{u}_{i}, \bvarphi_{j})_{X} \bvarphi_{j}\|_{X}^{2}=\sum_{j=N_u+1}^{{M_u}} \lambda_{j},
		 \\
        &\frac{1}{N_t+1} \sum_{i=1}^{N_t+1}\|\tilde{\mathbf{u}}_{i}-\sum_{j=1}^{N_{\tilde{u}}}(\tilde{\mathbf{u}}_{i}, \tilde{\bvarphi}_{j})_{X} \tilde{\bvarphi}_{j}\|_{X}^{2}=\sum_{j=N_{\tilde{u}}+1}^{{M_{\tilde{u}}}} \tilde{\lambda}_{j},
		 \\
		&\frac{1}{N_t+1} \sum_{i=1}^{N_t+1}\|p_{i}-\sum_{j=1}^{N_p}(p_{i}, \psi_{j})_{Y} \psi_{j}\|_{Y}^{2}=\sum_{j=N_p+1}^{{M_p}} \sigma_{j},
	\end{align}
	where $M_u,M_{\tilde{u}}$ and $M_p$ are the rank of the snapshots $\mathcal{V}, \mathcal{\tilde{V}}$ and $\mathcal{P}$ respectively. We are now in position to construct the reduced model for unsteady Stokes by using the POD basis.
	
	\subsection{Formulating projection ROM}
Let
	$$
	\begin{aligned}
        & U_r=\span\{\bvarphi_1,\bvarphi_2,\cdots,\bvarphi_{N_u}\}, \\
		& \tilde{U}_r=\span\{\tilde{\bvarphi}_1,\tilde{\bvarphi}_2,\cdots,\tilde{\bvarphi}_{N_{\tilde{u}}}\}, \\
		& Q_r=\span\{\psi_1,\psi_2,\cdots,\psi_{N_p}\}.
	\end{aligned}
	$$
    The initial solutions required by the BDF2 scheme are obtained by projection of the corresponding full-order solutions onto the reduced spaces. In particular, for $k=0,1,2$, we define
\[
\qquad 
\tilde{\mathbf{u}}_r^k = \sum_{i=1}^{N_{\tilde{u}}} (\tilde{\mathbf{u}}_k,\tilde{\bm{\varphi}}_i)_X\tilde{\bm{\varphi}}_i
\qquad 
p_r^k = \sum_{i=1}^{N_p} (p_k,\psi_i)_Y\psi_i,
\]  
  Then, for $n \geq 2:$
	\begin{subequations}\label{rom_bdf_1}
		\begin{align}
			\frac {3\tilde{\mathbf{u}}_{r}^{n+1} - 4\tilde{\mathbf{u}}_{r}^n+\tilde{\mathbf{u}}_{r}^{n-1}} {2\tau}
			- \nu \Delta  \tilde{\mathbf{u}}_{r}^{n+1}
			+ \frac{1}{3}\nabla (7p_{r}^{n}-5p_{r}^{n-1}+p_{r}^{n-2})
			& =f^{n+1},~~\text{in}~\Omega,\\
			-\nabla\cdot \tilde{\mathbf{u}}_{r}^{n+1} + \frac{2 \tau}{3} \Delta p_{r}^{n+1}&=  \frac{2 \tau}{3} \Delta p_{r}^{n},~~\text{in}~\Omega, \\
            \mathbf{u}_r^{n+1}  &= \mathbf{\tilde{u}}_r^{n+1} - \frac23 \nabla(p_r^{n+1} - p_r^n), ~~\text{in} ~ \Omega. 
		\end{align}
	\end{subequations}
	Below we give the matrix calculation form. We first expand $\mathbf{u}_r, \tilde{\mathbf{u}}_{r}, p_{r}$ by POD basis $\mathbf{\varphi}_{i}(\mathbf{x}), \mathbf{\tilde{\varphi}}_{i}(\mathbf{x}), \psi_{i}(\mathbf{x})$ respectively
    $$
	\begin{aligned}
         & \mathbf{u}_{r}(\mathbf{x},t)=\sum_{i=1}^{N_u}a_{i}(t)\bvarphi_{i}(\mathbf{x}),\\
		& \tilde{\mathbf{u}}_{r}(\mathbf{x},t)=\sum_{i=1}^{N_{\tilde{u}}}\tilde{a}_{i}(t)\tilde{\bvarphi}_{i}(\mathbf{x}),\\
		& p_{r}(\mathbf{x},t)=\sum_{i=1}^{N_p}b_{i}(t)\psi_{i}(\mathbf{x}).
	\end{aligned}
    $$
	Substituting the above equations to \eqref{rom_bdf_1} under the Galerkin framework, we obtain \newline 
    For $n \geq 2:$
	\begin{subequations}\label{rom_bdf_2}
		\begin{align}
			\Big(\frac {3\tilde{\mathbf{u}}_{r}^{n+1} - 4\tilde{\mathbf{u}}_{r}^n+\tilde{\mathbf{u}}_{r}^{n-1}} {2\tau},  \tilde{\mathbf{v}}_{r}\Big)
			&+ \nu (\nabla {\tilde{\mathbf{u}}}_{r}^{n+1},\tilde{\mathbf{v}}_{r})\nonumber\\
			&- \frac{1}{3}( 7p_{r}^{n}-5p_{r}^{n-1}+p_{r}^{n-2},\nabla\cdot \tilde{\mathbf{v}}_{r})
			=(f^{n+1},\tilde{\mathbf{v}}_{r}),~~\forall \tilde{\mathbf{v}}_{r}\in \tilde{U}_r, \label{rom_bdf_2a}\\
			(\nabla \cdot \tilde{\mathbf{u}}_{r}^{n+1},q_{r})
			&+ \frac{2 \tau}{3} (\nabla p_{r}^{n+1},\nabla q_{r})=\frac{2 \tau}{3} (\nabla p_{r}^{n},\nabla q_{r}),
			~~\forall q_{r}\in Q_r, \label{rom_bdf_2b} \\
            &(\mathbf{u}_r^{n+1}, \mathbf{v}_r) = (\mathbf{\tilde{u}}_r^{n+1}, \mathbf{v}_r) ~~\forall \mathbf{v}_r \in U_r. \label{rom_bdf_2c}
		\end{align}
	\end{subequations}
	Here we choose the test functions $\mathbf{v}_{r}=\bvarphi_{i}(\mathbf{x}), i=1,\cdots, N_u$, $\tilde{\mathbf{v}}_{r}=\tilde{\bvarphi}_{i}(\mathbf{x}), i=1,\cdots, N_{\tilde{u}},$ $q_r=\psi_{j}(\mathbf{x}), j=1,\cdots, N_p$. Thus, the following reduced system is obtained: \newline 
    For $n \geq 2:$
	\begin{subequations}\label{rom_bdf_3}
		\begin{align}
			\big(\frac{3}{2\tau} \tilde{\mathbf{M}}_{r}+\mathbf{S}^{\tilde{u}}_{r}\big)\tilde{\mathbf{a}}^{n+1}
			& = \frac{1}{2\tau}\tilde{\mathbf{M}}_{r}(4\tilde{\mathbf{a}}^{n}-\tilde{\mathbf{a}}^{n-1})
			+\frac{1}{3}\mathbf{P}(7\mathbf{b}^{n}-5\mathbf{b}^{n-1}+\mathbf{b}^{n-2})+\mathbf{F}_{r},  \label{rom3a}\\
			\mathbf{S}^{p}_r\mathbf{b}^{n+1}
			&=\mathbf{S}^{p}_r\mathbf{b}^{n} -\frac{3}{2\tau}\mathbf{P}^{T}\tilde{\mathbf{a}}^{n+1},    \label{rom3b} \\
            \mathbf{M}_r \mathbf{a}^{n+1} 
            & = \hat{\mathbf{M}} \tilde{\mathbf{a}}^{n+1}. 
		\end{align}
	\end{subequations}
	The matrices $\tilde{\mathbf{M}}_{r}\in R^{N_{\tilde{u}}\times N_{\tilde{u}}}$, $\mathbf{S}^{\tilde{u}}_{r}\in R^{N_{\tilde{u}}\times N_{\tilde{u}}}$, $\mathbf{F}_{r}\in R^{N_{\tilde{u}}\times1}$, $\mathbf{P}\in R^{N_{\tilde{u}}\times N_p}$, $\mathbf{S}^{p}_{r}\in R^{N_p\times N_p}$, $\mathbf{M}_r \in R^{N_{u} \times N_u},$ $\hat{\mathbf{M}}_r \in R^{N_u \times N_{\tilde{u}}}$ appearing in the system are defined as follows:
    $$
	\begin{aligned}
		&  (\tilde{\mathbf{M}}_r)_{ij}= 
		(\tilde{\bvarphi}_{i},\tilde{\bvarphi}_{j}),\quad
		(\mathbf{S}^{\tilde{u}}_{r})_{ij}=(\nabla \tilde{\bvarphi}_{i},\nabla \tilde{\bvarphi}_{j}), \quad
		(\mathbf{F})_{ij}=(f,\tilde{\bvarphi}_{j}), \\
		&  (\mathbf{P})_{ij}=(\psi_{i},\nabla\cdot \tilde{\bvarphi}_{j}), \quad
		(\mathbf{S}^{p}_{r})_{ij}=(\nabla\psi_{i},\nabla\psi_{j}), \quad (\mathbf{M}_r)_{ij} = (\mathbf{\varphi}_i, \mathbf{\varphi}_j), \quad (\hat{\mathbf{M}}_r)_{ij} = (\mathbf{\varphi}_i, \tilde{\mathbf{\varphi}}_j). 
	\end{aligned}
    $$

	\subsection{Stability and error estimate of pressure projection ROM}
	Firstly, we prove that the algorithm \eqref{rom_bdf_2} is stable.
	
	\begin{theorem}
		The solution $\{(\mathbf{u}_r^n,\tilde{\mathbf{u}}_{r}^{n},p_{r}^{n})\}$ to \eqref{rom_bdf_2} satisfies the following estimates:
		for $n>2$
        
        \begin{align}\label{rom_st_0}
           \| \mathbf{u}_r^n \| \le c(\|\tilde{\mathbf{u}}_{r}^{1}\|_{0}^2+\|\tilde{\mathbf{u}}_{r}^{2}\|_{0}^2
			+\tau^2\sum_{i=0}^2\|\nabla p_{r}^{i}\|_{0}^2 + \nu^{-1} \sum_{k=2}^{N_t} \tau \| f^{k}\|_{\bm{H}^{-1}}^2).
        \end{align}
        
		\begin{align}\label{rom_st_1}
			\|\tilde{\mathbf{u}}_{r}^{n}\|_{0}^2+\tau^2\|\nabla p_{r}^{n}\|_{0}^2
			+\tau^2\|\nabla (p_{r}^{n-1}-p_{r}^{n-2})\|_{0}^2
			+\sum_{k=2}^{{N_t}}(\nu\tau \|\nabla\tilde{\mathbf{u}}_{r}^{k}\|_{0}^2
			+\tau^2\|\nabla (p_{r}^{k-1}-p_{r}^{k-2})\|_{0}^2) \nonumber \\
			\le c(\|\tilde{\mathbf{u}}_{r}^{1}\|_{0}^2+\|\tilde{\mathbf{u}}_{r}^{2}\|_{0}^2
			+\tau^2\sum_{i=0}^2\|\nabla p_{r}^{i}\|_{0}^2 + \nu^{-1} \sum_{k=2}^{N_t} \tau \| f^{k}\|_{\bm{H}^{-1}}^2).
		\end{align}
	\end{theorem}
	\begin{proof}
First, we estimate the velocity $\mathbf{u}_r^{n+1},$ by taking $\mathbf{v}_r =  \mathbf{u}_r^{n+1}$ in \eqref{rom_bdf_2c} and applying Cauchy-Schwarz and Young's inesqualities: 
    \begin{equation}\label{eq:rom_pf_0}
        \| \mathbf{u}_r^{n+1} \|_0^2 \leq \| \mathbf{\tilde{u}}_r^{n+1} \|_0^2.
    \end{equation}
    Therefore, estimate \eqref{rom_st_0} follows from \eqref{eq:rom_pf_0} by proving \eqref{rom_st_1} in the following step. 
		To prove \eqref{rom_st_1}, we follow the idea of Theorem 5.1 in \cite{guermond2011error}. For ease of reading, we briefly describe it.
		Taking $\mathbf{v}_{r}=4\tau\tilde{\mathbf{u}}_{r}^{n+1}$ in \eqref{rom_bdf_2} and using the identity
		\be\label{rom_st_pf_5}
		2a(3a-4b+c)=3a^2-4b^2+c^2+2(a-b)^2-2(b-c)^2+(a-2b+c)^2,
		\nonumber
		\ee
		we have
		\begin{equation}\label{rom_st_pf_6}
			\begin{split}
				&3\|\tilde{\mathbf{u}}_{r}^{n+1}\|_{0}^2-4\|\tilde{\mathbf{u}}_{r}^{n}\|_{0}^2+\|\tilde{\mathbf{u}}_{r}^{n-1}\|_{0}^2
				+2\|\tilde{\mathbf{u}}_{r}^{n+1}-\tilde{\mathbf{u}}_{r}^{n}\|_{0}^2
				-2\|\tilde{\mathbf{u}}_{r}^{n}-\tilde{\mathbf{u}}_{r}^{n-1}\|_{0}^2\\
				&+\|\tilde{\mathbf{u}}_{r}^{n+1}-2\tilde{\mathbf{u}}_{r}^{n}+\tilde{\mathbf{u}}_{r}^{n-1}\|_{0}^2
				+4\tau \nu\|\nabla\tilde{\mathbf{u}}_{r}^{n+1}\|_{0}^2
				+4\tau(\nabla p_{r}^{n+1},\tilde{\mathbf{u}}_{r}^{n+1})\\
				&-4\tau(\nabla (p_{r}^{n+1}-2 p_{r}^{n}+p_{r}^{n-1}),\tilde{\mathbf{u}}_{r}^{n+1})
				+\frac{4\tau}{3}(\nabla (p_{r}^{n}-2p_{r}^{n-1}+p_{r}^{n-2}),\tilde{\mathbf{u}}_{r}^{n+1})
				=4 \tau \langle f^{n+1}, \tilde{\mathbf{u}}_r^{n+1} \rangle.
			\end{split}
		\end{equation}
		Choosing $q_r=p_{r}^{n+1}$, $p_{r}^{n+1}-2 p_{r}^{n}+p_{r}^{n-1}$ and $p_{r}^{n}-2p_{r}^{n-1}+p_{r}^{n-2}$ in \eqref{rom_bdf_2b} respectively, the results list as below
		\begin{equation}\label{rom_st_pf_7}
			\begin{split}
				(\tilde{\mathbf{u}}_{r}^{n+1},\nabla p_{r}^{n+1})
				&= \frac{2 \tau}{3} (\nabla (p_{r}^{n+1}-p_{r}^{n}),\nabla p_{r}^{n+1})\\
				&= \frac{\tau}{3} (\|\nabla p_{r}^{n+1}\|_{0}^2-\|\nabla p_{r}^{n}\|_{0}^2+\|\nabla (p_{r}^{n+1}-p_{r}^{n})\|_{0}^2),
			\end{split}
		\end{equation}
		\begin{equation}\label{rom_st_pf_8}
			\begin{split}
				&(\tilde{\mathbf{u}}_{r}^{n+1},\nabla (p_{r}^{n+1}-2 p_{r}^{n}+p_{r}^{n-1}))
				= \frac{2 \tau}{3} (\nabla (p_{r}^{n+1}-p_{r}^{n}),\nabla (p_{r}^{n+1}-2 p_{r}^{n}+p_{r}^{n-1}))\\
				&= \frac{\tau}{3} (\|\nabla (p_{r}^{n+1}-p_{r}^{n})\|_{0}^2-\|\nabla (p_{r}^{n}-p_{r}^{n-1})\|_{0}^2
				+\|\nabla (p_{r}^{n+1}-2p_{r}^{n}+p_{r}^{n-1})\|_{0}^2),
			\end{split}
		\end{equation}
		\begin{equation}\label{rom_st_pf_9}
			\begin{split}
				(\tilde{\mathbf{u}}_{r}^{n+1},\nabla (p_{r}^{n}-2p_{r}^{n-1}+p_{r}^{n-2}))
				= \frac{2 \tau}{3} (\nabla (p_{r}^{n+1}-p_{r}^{n}),\nabla (p_{r}^{n}-2p_{r}^{n-1}+p_{r}^{n-2})).
			\end{split}
		\end{equation}
		Putting \eqref{rom_st_pf_7}-\eqref{rom_st_pf_9} into \eqref{rom_st_pf_6} we get
		\begin{equation}\label{rom_st_pf_10}
			\begin{split}
				&3\|\tilde{\mathbf{u}}_{r}^{n+1}\|_{0}^2-4\|\tilde{\mathbf{u}}_{r}^{n}\|_{0}^2+\|\tilde{\mathbf{u}}_{r}^{n-1}\|_{0}^2
				+2\|\tilde{\mathbf{u}}_{r}^{n+1}-\tilde{\mathbf{u}}_{r}^{n}\|_{0}^2
				-2\|\tilde{\mathbf{u}}_{r}^{n}-\tilde{\mathbf{u}}_{r}^{n-1}\|_{0}^2\\
				&+\|\tilde{\mathbf{u}}_{r}^{n+1}-2\tilde{\mathbf{u}}_{r}^{n}+\tilde{\mathbf{u}}_{r}^{n-1}\|_{0}^2
				+4\tau\|\nabla\tilde{\mathbf{u}}_{r}^{n+1}\|_{0}^2
				+\frac{4\tau^2}{3} (\|\nabla p_{r}^{n+1}\|_{0}^2-\|\nabla p_{r}^{n}\|_{0}^2\\
				&+\|\nabla (p_{r}^{n}-p_{r}^{n-1})\|_{0}^2
				-\|\nabla (p_{r}^{n+1}-2p_{r}^{n}+p_{r}^{n-1})\|_{0}^2)\\
				&+\frac{8\tau^2}{9}(\nabla (p_{r}^{n+1}-p_{r}^{n}),\nabla (p_{r}^{n}-2p_{r}^{n-1}+p_{r}^{n-2}))
				= 4 \tau \langle f^{n+1}, \tilde{\mathbf{u}}_r^{n+1} \rangle.
			\end{split}
		\end{equation}
		Now we deal with the last term on the right hand side of \eqref{rom_st_pf_10}. Taking the difference at time $t^{n+1}$ and $t^{n}$ of \eqref{rom_bdf_2b}, we derive
		\begin{equation}\label{rom_st_pf_11}
			\begin{split}
				(\tilde{\mathbf{u}}_{r}^{n+1}-\tilde{\mathbf{u}}_{r}^{n},\nabla q_{r}^{n})
				= \frac{2 \tau}{3} (\nabla (p_{r}^{n+1}-2p_{r}^{n}+p_{r}^{n-1}),\nabla q_{r}).
			\end{split}
		\end{equation}
		Taking $q_{r}=\frac{2 \tau}{3}(p_{r}^{n+1}-2p_{r}^{n}+p_{r}^{n-1})$ in the above equation, we deduce that
		\begin{equation}\label{rom_st_pf_12}
			\begin{split}
				(\tilde{\mathbf{u}}_{r}^{n+1}-\tilde{\mathbf{u}}_{r}^{n},\nabla (p_{r}^{n+1}-2p_{r}^{n}+p_{r}^{n-1}))
				= \frac{4 \tau^2}{9} \|\nabla (p_{r}^{n+1}-2p_{r}^{n}+p_{r}^{n-1})\|_{0}^2.
			\end{split}
		\end{equation}
		Using the equation $(a-b)^2=a^2-2ab+b^2$, the above equation can be rewritten as
		\begin{equation}\label{rom_st_pf_13}
			\begin{split}
				\|\tilde{\mathbf{u}}_{r}^{n+1}-\tilde{\mathbf{u}}_{r}^{n}\|_{0}^2
				=\|\tilde{\mathbf{u}}_{r}^{n+1}-\tilde{\mathbf{u}}_{r}^{n}-\frac{2\tau}{3} \nabla (p_{r}^{n+1}-2p_{r}^{n}+p_{r}^{n-1})\|_{0}^2
				+ \frac{4 \tau^2}{9} \|\nabla (p_{r}^{n+1}-2p_{r}^{n}+p_{r}^{n-1})\|_{0}^2.
			\end{split}
		\end{equation}
		Substituting \eqref{rom_st_pf_13} into \eqref{rom_st_pf_10} we obtain
		\begin{equation}\label{rom_st_pf_14}
			\begin{split}
				&3\|\tilde{\mathbf{u}}_{r}^{n+1}\|_{0}^2-4\|\tilde{\mathbf{u}}_{r}^{n}\|_{0}^2+\|\tilde{\mathbf{u}}_{r}^{n-1}\|_{0}^2
				+2\|\tilde{\mathbf{u}}_{r}^{n+1}-\tilde{\mathbf{u}}_{r}^{n}
				-\frac{2\tau}{3} \nabla (p_{r}^{n+1}-2p_{r}^{n}+p_{r}^{n-1})\|_{0}^2\\
				&-2\|\tilde{\mathbf{u}}_{r}^{n}-\tilde{\mathbf{u}}_{r}^{n-1}
				-\frac{2\tau}{3} \nabla (p_{r}^{n}-2p_{r}^{n-1}+p_{r}^{n-2})\|_{0}^2
				+\|\tilde{\mathbf{u}}_{r}^{n+1}-2\tilde{\mathbf{u}}_{r}^{n}+\tilde{\mathbf{u}}_{r}^{n-1}\|_{0}^2\\
				&+4\tau \nu\|\nabla\tilde{\mathbf{u}}_{r}^{n+1}\|_{0}^2
				+\frac{4\tau^2}{3} (\|\nabla p_{r}^{n+1}\|_{0}^2-\|\nabla p_{r}^{n}\|_{0}^2
				+\|\nabla (p_{r}^{n}-p_{r}^{n-1})\|_{0}^2)\\
				&- \frac{4\tau^2}{9}\|\nabla (p_{r}^{n+1}-2p_{r}^{n}+p_{r}^{n-1})\|_{0}^2
				-\frac{8 \tau^2}{9} \|\nabla (p_{r}^{n}-2p_{r}^{n-1}+p_{r}^{n-2})\|_{0}^2\\
				&+\frac{8\tau^2}{9}(\nabla (p_{r}^{n+1}-p_{r}^{n}),\nabla (p_{r}^{n}-2p_{r}^{n-1}+p_{r}^{n-2}))
				=4 \langle f^{n+1}, \tilde{\mathbf{u}}_r^{n+1} \rangle.
			\end{split}
		\end{equation}
		Now we consider the last three terms on the right hand side of \eqref{rom_st_pf_14}.
		\begin{equation}\label{rom_st_pf_15}
			\begin{split}
				&- \frac{4\tau^2}{9}\|\nabla (p_{r}^{n+1}-2p_{r}^{n}+p_{r}^{n-1})\|_{0}^2
				-\frac{8 \tau^2}{9} \|\nabla (p_{r}^{n}-2p_{r}^{n-1}+p_{r}^{n-2})\|_{0}^2\\
				&+\frac{8\tau^2}{9}(\nabla (p_{r}^{n+1}-p_{r}^{n}),\nabla (p_{r}^{n}-2p_{r}^{n-1}+p_{r}^{n-2}))\\
				=&- \frac{4\tau^2}{9}\|\nabla (p_{r}^{n+1}-3p_{r}^{n}+3p_{r}^{n-1}-p_{r}^{n-2})\|_{0}^2
				-\frac{4\tau^2}{9}(\nabla (p_{r}^{n}-2p_{r}^{n-1}+p_{r}^{n-2}),\nabla (-p_{r}^{n}+p_{r}^{n-2}))\\
				=&- \frac{4\tau^2}{9}\|\nabla (p_{r}^{n+1}-3p_{r}^{n}+3p_{r}^{n-1}-p_{r}^{n-2})\|_{0}^2
				+\frac{4\tau^2}{9}(\nabla (p_{r}^{n}-2p_{r}^{n-1}+p_{r}^{n-2}),\nabla ((p_{r}^{n}-p_{r}^{n-1})+(p_{r}^{n-1}-p_{r}^{n-2}))) . \nonumber
			\end{split}
		\end{equation}
		Combining different time steps of \eqref{rom_bdf_2b}, we have
		\be\label{rom_st_pf_16}
		\frac{4\tau^2}{9}\|\nabla (p_{r}^{n+1}-3p_{r}^{n}+3p_{r}^{n-1}-p_{r}^{n-2})\|_{0}^2
		\le \|\tilde{\mathbf{u}}_{r}^{n+1}-2\tilde{\mathbf{u}}_{r}^{n}+\tilde{\mathbf{u}}_{r}^{n-1}\|_{0}^2.
		\ee
		Then we have
		\begin{equation}\label{rom_st_pf_17}
			\begin{split}
				&- \frac{4\tau^2}{9}\|\nabla (p_{r}^{n+1}-2p_{r}^{n}+p_{r}^{n-1})\|_{0}^2
				-\frac{8 \tau^2}{9} \|\nabla (p_{r}^{n}-2p_{r}^{n-1}+p_{r}^{n-2})\|_{0}^2\\
				&+\frac{8\tau^2}{9}(\nabla (p_{r}^{n+1}-p_{r}^{n}),\nabla (p_{r}^{n}-2p_{r}^{n-1}+p_{r}^{n-2}))\\
				\ge & -\|\tilde{\mathbf{u}}_{r}^{n+1}-2\tilde{\mathbf{u}}_{r}^{n}+\tilde{\mathbf{u}}_{r}^{n-1}\|_{0}^2
				+ \frac{4\tau^2}{9}(\|\nabla (p_{r}^{n}-p_{r}^{n-1})\|_{0}^2-\|\nabla (p_{r}^{n-1}-p_{r}^{n-2})\|_{0}^2).
			\end{split}
		\end{equation}
		Thus, gathering all results, we can deduce the following estimate
		\begin{equation}\label{rom_st_pf_18}
			\begin{split}
				&3\|\tilde{\mathbf{u}}_{r}^{n+1}\|_{0}^2-4\|\tilde{\mathbf{u}}_{r}^{n}\|_{0}^2+\|\tilde{\mathbf{u}}_{r}^{n-1}\|_{0}^2
				+2\|\tilde{\mathbf{u}}_{r}^{n+1}-\tilde{\mathbf{u}}_{r}^{n}
				-\frac{2\tau}{3} \nabla (p_{r}^{n+1}-2p_{r}^{n}+p_{r}^{n-1})\|_{0}^2\\
				&-2\|\tilde{\mathbf{u}}_{r}^{n}-\tilde{\mathbf{u}}_{r}^{n-1}
				-\frac{2\tau}{3} \nabla (p_{r}^{n}-2p_{r}^{n-1}+p_{r}^{n-2})\|_{0}^2 +2\tau \nu\|\nabla\tilde{\mathbf{u}}_{r}^{n+1}\|_{0}^2\\
				&
				+\frac{4\tau^2}{3} (\|\nabla p_{r}^{n+1}\|_{0}^2-\|\nabla p_{r}^{n}\|_{0}^2
				+\|\nabla (p_{r}^{n}-p_{r}^{n-1})\|_{0}^2)\\
				&+\frac{4\tau^2}{9}(\|\nabla (p_{r}^{n}-p_{r}^{n-1})\|_{0}^2-\|\nabla (p_{r}^{n-1}-p_{r}^{n-2})\|_{0}^2) - 2 \tau \nu^{-1} \| f^{n+1} \|_{\bf{H}^{-1}}^2
				\le 0.
			\end{split}
		\end{equation}
		Here we denote 
		$$
		\begin{aligned}
			&a^{k+1}=\|\tilde{\mathbf{u}}_{r}^{n+1}\|_{0}^2, \\
			&b^{k+1}=2\tau\|\nabla\tilde{\mathbf{u}}_{r}^{n+1}\|_{0}^2 + \frac{4\tau^2}{3}\|\nabla (p_{r}^{n}-p_{r}^{n-1})\|_{0}^2 - 2\tau \nu^{-1} \| f^{n+1} \|_{H^{-1}}^2, \\
			&d^{k+1}=2\|\tilde{\mathbf{u}}_{r}^{n+1}-\tilde{\mathbf{u}}_{r}^{n}
			-\frac{2\tau}{3} \nabla (p_{r}^{n+1}-2p_{r}^{n}+p_{r}^{n-1})\|_{0}^2+\frac{4\tau^2}{3} \|\nabla p_{r}^{n+1}\|_{0}^2+\frac{4\tau^2}{9}\|\nabla (p_{r}^{n}-p_{r}^{n-1})\|_{0}^2.
		\end{aligned}
		$$
		Using the inequality of three term recursion that appears in \cite{guermond2011error} (Corollary 5.1),
		let $\{a^k\}_{k\ge1}$ be the solution to the three term recursion inequality
		\be\label{rom_st_pf_19}
		3a^{k+1}-4a^{k}+a^{k-1}\le g^{k+1},
		\ee
		with the initial condition $a^{0}$ and $a^{1}$, where $g^{k+1}=-(b^{k+1}+d^{k+1}-d^{k})$. 
		Then the following estimate holds
		\be\label{rom_st_pf_20}
		a^{\nu}+\frac{1}{3}d^{\nu}+\frac{1}{3}\sum_{s=2}^{\nu}b^{s}\le c(a^1+a^2+d^2),\quad \nu\geq 3
		\ee
		where $c\in \mathbb{R}$.
		Finally, we omit some positive terms, then conclude the result \eqref{rom_st_1}.
	\end{proof}

	We now claim the error analysis as follows. 
	Define $\Pi_{\mathbf{v}}: W_u(\Omega)\rightarrow U_{r},$ $\Pi_{\mathbf{\tilde{v}}}: W_{\tilde{u}}(\Omega)\rightarrow \tilde{U}_{r}$ and
	$\Pi_{q}: W_p(\Omega)\rightarrow Q_{r}$ such that
	\bex
    ((\mathbf{u}-\Pi_{\mathbf{v}}\mathbf{u}),\mathbf{v})_{W_u}=0,
    \quad \mathbf{v}\in U_{r}, \\
	((\mathbf{\tilde{u}}-\Pi_{\mathbf{\tilde{v}}}\mathbf{\tilde{u}}),\mathbf{\tilde{v}})_{W_{\tilde{u}}}=0,
    \quad \mathbf{\tilde{v}}\in \tilde{U}_{r}, \\
	((p-\Pi_{q}p), q)_{W_p}=0,
    \quad q\in Q_{r}.
	\eex
	From the estimate of POD error, we have
	\begin{align}
        &\frac{1}{N_t+1} \sum_{i=1}^{N_t+1}\|\mathbf{u}_{i}-\Pi_{\mathbf{v}}\mathbf{u}_{i}\|_{W_u}^{2}=\sum_{j=N_u+1}^{{M_u}} \lambda_{j},
		\nonumber \\
		&\frac{1}{N_t+1} \sum_{i=1}^{N_t+1}\|\mathbf{\tilde{u}}_{i}-\Pi_{\mathbf{v}}\mathbf{\tilde{u}}_{i}\|_{W_u}^{2}=\sum_{j=N_{\tilde{u}}+1}^{{M_{\tilde{u}}}} \tilde{\lambda}_{j},
		\nonumber \\
		&\frac{1}{N_t+1} \sum_{i=1}^{N_t+1}\|p_{i}-\Pi_{q}p_{i}\|_{W_p}^{2}=\sum_{j=N_p+1}^{{M_p}} \sigma_{j}.
		\nonumber
	\end{align}
    
	Note that we define a general inner product to build both reduced spaces, velocity and pressure, respectively.

    Before to state the error estimation theorem, we need to define the following results:

    \begin{definition}
Let $X$ be a Hilbert space and $Y,Z$ two finite-dimensional subspaces of $X$ with intersection reduced to the zero function. 
The pair of finite-dimensional spaces $(Y,Z)$ is called to satisfy the saturation property if there exists a positive constant C such that:
\begin{equation}
    \|y\|_X + \|z\|_X \leq C \|y+z\|_X \quad \forall y \in Y, z \in Z. 
\end{equation}
\end{definition}
The saturation property can be viewed as an inverse triangular inequality. 

\begin{lemma}\label{lemaSaturation}(see \cite{samu}, Lemma 5.3).
The saturation property is equivalent to the existence of a constant $\alpha < 1$ such that:
\begin{equation}
    |(y,z)_X| \leq \alpha \|y\|_X \|z\|_X \quad\forall y \in Y, z \in Z. 
\end{equation}
\end{lemma}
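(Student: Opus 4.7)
The plan is to prove the equivalence in both directions, exploiting the fact that $Y$ and $Z$ are linear subspaces (so $-z\in Z$ whenever $z\in Z$) together with the polarization identity $\|y\pm z\|_X^2 = \|y\|_X^2 \pm 2(y,z)_X + \|z\|_X^2$.

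First I would handle the easy direction, that the inner-product bound implies saturation. Expanding $\|y+z\|_X^2$ and inserting $|(y,z)_X|\leq \alpha\|y\|_X\|z\|_X$ yields
\[
\|y+z\|_X^2 \;\geq\; \|y\|_X^2 - 2\alpha\|y\|_X\|z\|_X + \|z\|_X^2 \;\geq\; (1-\alpha)\bigl(\|y\|_X^2+\|z\|_X^2\bigr) \;\geq\; \tfrac{1-\alpha}{2}\bigl(\|y\|_X+\|z\|_X\bigr)^2,
\]
where the last step uses $(a+b)^2\leq 2(a^2+b^2)$. Taking square roots gives the saturation property with constant $C=\sqrt{2/(1-\alpha)}$.

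For the converse, I would first reduce to unit vectors by bilinearity of the inner product: both sides of the target inequality scale linearly in $\|y\|_X$ and in $\|z\|_X$, so it suffices to prove the bound when $\|y\|_X=\|z\|_X=1$. Saturation applied to the pair $(y,z)$ then gives $\|y+z\|_X\geq 2/C$, i.e., $2+2(y,z)_X\geq 4/C^2$, hence $(y,z)_X\geq 2/C^2-1$. Applying saturation to the pair $(y,-z)$, which is legitimate since $-z\in Z$, produces the matching upper bound $(y,z)_X\leq 1-2/C^2$. Setting $\alpha:=1-2/C^2$ therefore yields $|(y,z)_X|\leq\alpha\|y\|_X\|z\|_X$, and mere consistency of the lower and upper bounds forces $C\geq\sqrt{2}$ (as one can also see directly by applying saturation to any pair of orthogonal unit vectors), so $\alpha\in[0,1)$ as required.

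The main subtlety, rather than a real obstacle, is recognising that a one-sided saturation inequality converts into a two-sided bound on the inner product thanks to the sign symmetry $z\mapsto -z$ within the subspace $Z$; everything else is a direct expansion via polarization. It is worth remarking that $\alpha$ is precisely the cosine of the Friedrichs angle between $Y$ and $Z$, so the lemma is simply the standard geometric characterisation that two subspaces with trivial intersection are transverse (make a uniformly positive angle) if and only if the saturation property holds.
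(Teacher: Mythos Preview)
Your argument is correct. The two directions are handled cleanly: the polarization expansion plus AM--GM gives the easy implication, and normalising to unit vectors together with the sign flip $z\mapsto -z$ converts the one-sided saturation bound into the two-sided estimate on $(y,z)_X$. One small quibble: your parenthetical justification that $C\geq\sqrt2$ follows ``by applying saturation to any pair of orthogonal unit vectors'' tacitly assumes such a pair exists, which need not hold for arbitrary $Y,Z$. The cleaner justification is the one you also give, namely that the derived inequalities $-\alpha\leq(y,z)_X\leq\alpha$ for unit vectors are jointly consistent only if $\alpha\geq0$, i.e.\ $C\geq\sqrt2$ (taking for granted that $Y$ and $Z$ are nontrivial, else the statement is vacuous).

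As for comparison with the paper: the paper does not supply its own proof of this lemma at all---it simply cites it from an external reference (Lemma~5.3 of \cite{samu}). Your self-contained argument therefore goes beyond what the paper provides, and the geometric remark about the Friedrichs angle matches precisely the interpretation the paper gives immediately after the lemma.
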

We will denote by $\alpha$ the saturation constant. Then, we can interpret the saturation property in the sense that the angle between spaces $Y$ and $Z,$ defined by:

\begin{equation}\label{alphacomputation}
    \theta = \arccos \left( \sup_{y \in Y \setminus \{0\}, z \in Z \setminus\{0\}} \dfrac{(y,z)_X}{\|y\|_X \|z\|_X} \right),
\end{equation}
is uniformly bounded from below by a positive angle, and $\alpha = \cos(\theta).$\\

\begin{remark}\label{RemarkSatConst}
In the following error estimation theorem, we will use Lemma \ref{lemaSaturation} to bound some of the pressure error terms. In particular, the spaces involved in the application of the saturation lemma are $ Y = \tilde{U}_r$ and $Z = \text{span}\{\nabla \psi_1, \ldots, \nabla \psi_{N_p}\}$  with $X = L^2$ the Hilbert space considered. 
\end{remark}
\begin{remark}\label{RemarkAlpha}
The value of $\alpha$ can be directly estimated from equation~\eqref{alphacomputation}. Moreover, Theorem~\ref{th:errorestimation} imposes a restriction on $\alpha$. Numerical studies already available in the literature (see Section 6 of \cite{samu}) show that this restriction is typically satisfied: $\alpha$ takes very small values, of the order of $10^{-5}$, with a few number of POD basis and it increases as the number of POD basis functions is enlarged. Therefore, when only a few POD modes are retained, the condition required in Theorem \ref{th:errorestimation} is guaranteed.
\end{remark}

	\begin{theorem}\label{th:errorestimation}
		Let $\{(\tilde{\mathbf{u}}_{r}^{n},p_{r}^{n})\}$ and $\{(\tilde{\mathbf{u}}_{h}^{n},p_{h}^{n})\}$ be the solution of the problems \eqref{rom_bdf_2} and \eqref{fom_bdf_1}, respectively. If the saturation constant $\alpha$ of Lemma \ref{lemaSaturation} verifies $\alpha < 3/8,$ then the following estimates hold:
		for $n\ge2$

        \begin{align}\label{rom_ea_th_0}
            \sum_{n=2}^{N_t} \tau \| \mathbf{u}_h^{n+1} - \mathbf{u}_r^{n+1} \|_0^2 \leq C(\alpha) e^{T \frac{7 \tau + 4\alpha}{2}} \Big{(} \sum_{i=N_{\tilde{u}} +1}^{M_{\tilde{u}}} \tilde{\lambda}_i( \nu \tau^{-2} \| \bm{\tilde{\varphi}}_i\|_0^2 + \|\nabla \bm{\tilde{\varphi}}_i\|_0^2)  \nonumber \\  
            + \sum_{i=N_p+1}^{M_p} \sigma_i( \nu \| \psi_i \|_0^2 + \| \nabla \psi_i \|_0^2 )\Big{)} + 5\sum_{i=N_u+1}^{M_u} \lambda_i \| \bm{\varphi}_i \|_0^2
        \end{align}
        
		\begin{align}\label{rom_ea_th}
			&\sum_{n=2}^{N_t} \tau \|\tilde{\mathbf{u}}_{r}^{n+1}-\tilde{\mathbf{u}}_{h}^{n+1}\|_0^2 + 2\nu \sum_{n=2}^{N_t} \tau \| \nabla (\tilde{\mathbf{u}}_{r}^{n+1}-\tilde{\mathbf{u}}_{h}^{n+1}) \|_0^2
			+\frac{4\tau^2}{3} \sum_{n=2}^{N_t} \tau \|\nabla (p_{r}^{n+1}-p_{h}^{n+1})\|_0^2  \nonumber \\ 
			&\leq  C(\alpha) e^{T \frac{7 \tau + 4\alpha}{2}} \Big{(} \sum_{i=N_{\tilde{u}} +1}^{M_{\tilde{u}}} \tilde{\lambda}_i( \nu \tau^{-2} \| \bm{\tilde{\varphi}}_i\|_0^2 + \|\nabla \bm{\tilde{\varphi}}_i\|_0^2) + \sum_{i=N_p+1}^{M_p} \sigma_i( \nu \| \psi_i \|_0^2 + \| \nabla \psi_i \|_0^2 )\Big{)},
		\end{align}
		where the constant $C(\alpha)$ is independent from the discretization and reduced parameters.
	\end{theorem}
	\begin{proof}
Denote
		\begin{equation}
			\begin{split}
				&\mathbf{u}_{r}^{n+1}-\mathbf{u}_{h}^{n+1}
				=\mathbf{u}_{r}^{n+1}-\Pi_{\mathbf{v}}\mathbf{u}_{h}^{n+1}
				+\Pi_{\mathbf{v}}\mathbf{u}_{h}^{n+1}-\mathbf{u}_{h}^{n+1}
				:=e_{\mathbf{u}}^{n+1}+\theta_{\mathbf{u}}^{n+1}, \\
                &\mathbf{\tilde{u}}_{r}^{n+1}-\mathbf{\tilde{u}}_{h}^{n+1}
				=\mathbf{\tilde{u}}_{r}^{n+1}-\Pi_{\mathbf{\tilde{v}}}\mathbf{\tilde{u}}_{h}^{n+1}
				+\Pi_{\mathbf{\tilde{v}}}\mathbf{\tilde{u}}_{h}^{n+1}-\mathbf{\tilde{u}}_{h}^{n+1}
				:=e_{\mathbf{\tilde{u}}}^{n+1}+\theta_{\mathbf{\tilde{u}}}^{n+1}, \\
				&p_{r}^{n+1}-p_{h}^{n+1}
				=p_{r}^{n+1}-\Pi_{q} p_{h}^{n+1}+\Pi_{q} p_{h}^{n+1}-p_{h}^{n+1}
				:=e_{p}^{n+1}+\theta_{p}^{n+1}.
			\end{split}
		\end{equation}
    First, for the projection of the velocity $\mathbf{u}_h^{n+1},$ from \eqref{fom_bdf_2b} we obtain:
    \begin{equation}\label{eq:igualROM}
        (\Pi_{\mathbf{v}} \mathbf{u}_h^{n+1}, \mathbf{v}_r) = (\Pi_{\mathbf{\tilde{v}}} \mathbf{\tilde{u}}_h^{n+1}, \mathbf{v}_r) + (\theta_{\mathbf{u}}^{n+1},\mathbf{v}_r) -   (\theta_{\mathbf{\tilde{u}}}^{n+1},\mathbf{v}_r) \hspace{0.5cm} \forall \mathbf{v}_r \in U_r.
    \end{equation}
    If we substract equation \eqref{eq:igualROM} from the equation \eqref{rom_bdf_2c} we get:
    \begin{equation}
        (e_{\mathbf{u}}^{n+1},\mathbf{v}_r)  = (e_{\mathbf{\tilde{u}}}^{n+1},\mathbf{v}_r) - (\theta_{\mathbf{u}}^{n+1},\mathbf{v}_r) +   (\theta_{\mathbf{\tilde{u}}}^{n+1},\mathbf{v}_r) \hspace{0.5cm} \forall \mathbf{v}_r \in U_r.
    \end{equation}
    By taking $\mathbf{v}_r = e_{\mathbf{u}}^{n+1}$ and using Cauchy-Schwarz and Young inequalities, it follows
    \begin{equation}
       \dfrac14 \| e_{\mathbf{u}}^{n+1} \|_0^2 \leq \| e_{\mathbf{\tilde{u}}}^{n+1} \|_0^2 + \| \theta_{\mathbf{u}}^{n+1} \|_0^2 + \| \theta_{\mathbf{\tilde{u}}}^{n+1} \|_0^2.
    \end{equation}
    If we multiply last equation by $\tau$, summing from $n = 2$ to $k \leq N_t$ and using the triangle inequality with the POD projection error estimate, we get
    \begin{equation}
        \sum_{n=2}^{N_t} \tau \| \mathbf{u}_h^{n+1} - \mathbf{u}_r^{n+1} \|_0^2 \leq 4 \sum_{n=2}^{N_t} \tau \|e_{\tilde{\mathbf{u}}}^{n+1}\|_0^2 + 4\sum_{i=N_{\tilde{u}} +1}^{M_{\tilde{u}}} \tilde{\lambda}_i \| \bm{\tilde{\varphi}}_i\|_0^2 +  5\sum_{i=N_u + 1}^{M_u} \lambda_i \| \bm{\varphi}_i\|_0^2.
    \end{equation}
    Then, estimate \eqref{rom_ea_th_0} follows from the last inequality by proving \eqref{rom_ea_th}. Therefore, we continue the proof by proving \eqref{rom_ea_th}.
    
	Taking the difference of \eqref{fom_bdf_1} and \eqref{rom_bdf_1} we get
		\begin{equation}\label{rom_ea_pf_1}
			\begin{split}
				(\partial^2e_{\mathbf{\tilde{u}}}^{n+1},\mathbf{v})+ (\partial^2 \theta_{\mathbf{\tilde{u}}}^{n+1},\mathbf{v})
				+\nu (\nabla e_{\mathbf{\tilde{u}}}^{n+1},\nabla\mathbf{v})
				+ \nu(\nabla \theta_{\mathbf{\tilde{u}}}^{n+1},\nabla v) \\
				+\frac{1}{3}(\nabla (7e_{p}^{n}-5e_{p}^{n-1}+e_{p}^{n-2}), \mathbf{v})
				+\frac{1}{3}(\nabla (7\theta_{p}^{n}-5\theta_{p}^{n-1}+\theta_{p}^{n-2}),\mathbf{v})=0,
			\end{split}
		\end{equation}
		\begin{equation}\label{rom_ea_pf_2}
			\begin{split}
				(\nabla \cdot e_{\mathbf{\tilde{u}}}^{n+1}, q)+ (\nabla \cdot \theta_{\mathbf{\tilde{u}}}^{n+1}, q)
				+\frac{2\tau}{3}(\nabla(e_{p}^{n+1}-e_{p}^{n}),\nabla q) + \frac{2 \tau}{3}(\nabla(\theta_p^{n+1}- \theta_p^{n}),\nabla q)
				=0.
			\end{split}
		\end{equation}
		Taking $\mathbf{v}=e_{\mathbf{\tilde{u}}}^{n+1}$ and using the equality
		$2a(3a-4b+c)=a^2-b^2+(2a-b)^2-(2b-c)^2+(a-2b+c)^2,$ we have
		\begin{equation}\label{rom_ea_pf_3}
			\begin{split}
				&\frac{1}{4\tau}(\|e_{\mathbf{\tilde{u}}}^{n+1}\|_{0}^2-\|e_{\mathbf{\tilde{u}}}^{n}\|_{0}^2
				+\|2e_{\mathbf{\tilde{u}}}^{n+1}-e_{\mathbf{\tilde{u}}}^{n}\|_{0}^2-\|2e_{\mathbf{\tilde{u}}}^{n}-e_{\mathbf{\tilde{u}}}^{n-1}\|_{0}^2
				+\|e_{\mathbf{\tilde{u}}}^{n+1}-2e_{\mathbf{\tilde{u}}}^{n}+e_{\mathbf{\tilde{u}}}^{n-1}\|_{0}^2)
				+\nu \|\nabla e_{\mathbf{\tilde{u}}}^{n+1}\|_{0}^2 \nonumber \\
				&+(\partial^2\theta_{\mathbf{\tilde{u}}}^{n+1},e_{\mathbf{\tilde{u}}}^{n+1})
				+\nu (\nabla \theta_{\mathbf{\tilde{u}}}^{n+1}, \nabla e_{\mathbf{\tilde{u}}}^{n+1})
				+\frac{1}{3}(\nabla (7e_{p}^{n}-5e_{p}^{n-1}+e_{p}^{n-2}),  e_{\mathbf{\tilde{u}}}^{n+1}) \\
				&+\frac{1}{3}(\nabla (7\theta_{p}^{n}-5\theta_{p}^{n-1}+\theta_{p}^{n-2}), e_{\mathbf{\tilde{u}}}^{n+1})=0,
			\end{split}
		\end{equation}
		Taking $q=\frac{1}{3}(7e_{p}^{n}-5e_{p}^{n-1}+e_{p}^{n-2})$, we obtain
		\begin{equation}\label{rom_ea_pf_4}
			\begin{split}
				&-\frac{1}{3}(e_{\mathbf{\tilde{u}}}^{n+1}, \nabla (7e_{p}^{n}-5e_{p}^{n-1}+e_{p}^{n-2}))
				-\frac{1}{3}(\theta_{\mathbf{\tilde{u}}}^{n+1}, \nabla (7e_{p}^{n}-5e_{p}^{n-1}+e_{p}^{n-2}))\\
				&+\frac{2\tau}{9}(\nabla(e_{p}^{n+1}-e_{p}^{n}),\nabla (7e_{p}^{n}-5e_{p}^{n-1}+e_{p}^{n-2}))
				+\frac{2\tau}{9}(\nabla(\theta_{p}^{n+1}-\theta_{p}^{n}),\nabla (7e_{p}^{n}-5e_{p}^{n-1}+e_{p}^{n-2}))
				=0.
			\end{split}
		\end{equation}
		For \eqref{rom_ea_pf_4} at time $t^{n}$ and $t^{n-1}$, we have
		\begin{equation}\label{rom_ea_pf_5}
			\begin{split}
				(\nabla \cdot e_{\mathbf{\tilde{u}}}^{n}, q)+ (\nabla \cdot \theta_{\mathbf{\tilde{u}}}^{n}, q)
				+\frac{2\tau}{3}(\nabla(e_{p}^{n}-e_{p}^{n-1}),\nabla q)
				+\frac{2\tau}{3}(\nabla(\theta_{p}^{n}-\theta_{p}^{n-1}),\nabla q)
				=0.
			\end{split}
		\end{equation}
		\begin{equation}
			\label{rom_ea_pf_4_0}
			(\nabla \cdot e_{\mathbf{\tilde{u}}}^{n-1}, q)+ (\nabla \cdot \theta_{\mathbf{\tilde{u}}}^{n-1}, q)
			+\frac{2\tau}{3}(\nabla(e_{p}^{n-1}-e_{p}^{n-2}),\nabla q)
			+\frac{2\tau}{3}(\nabla(\theta_{p}^{n-1}-\theta_{p}^{n-2}),\nabla q)
			=0.
		\end{equation}
		Subtracting equation \eqref{rom_ea_pf_4} from equation \eqref{rom_ea_pf_5} and \eqref{rom_ea_pf_4_0}
		\begin{equation}\label{rom_ea_pf_6}
			\begin{split}
				(\nabla \cdot (3e_{\mathbf{\tilde{u}}}^{n+1}-4e_{\mathbf{\tilde{u}}}^{n}+e_{\mathbf{\tilde{u}}}^{n-1}), q)
				+(\nabla \cdot (3\theta_{\mathbf{\tilde{u}}}^{n+1}-4\theta_{\mathbf{\tilde{u}}}^{n}+\theta_{\mathbf{\tilde{u}}}^{n-1}), q) \nonumber \\
				+\frac{2\tau}{3}(\nabla(3e_{p}^{n+1}-7e_{p}^{n}+5e_{p}^{n-1}-e_{p}^{n-2}),\nabla q) \\
				+\frac{2\tau}{3}(\nabla(3\theta_{p}^{n+1}-7\theta_{p}^{n}+5\theta_{p}^{n-1}-\theta_{p}^{n-2}),\nabla q)
				=0.
			\end{split}
		\end{equation}
		and setting $q=e_{p}^{n+1}-e_{p}^{n}$, we obtain
		\begin{align}\label{rom_ea_pf_7}
			(\nabla \cdot (3e_{\mathbf{\tilde{u}}}^{n+1}-4e_{\mathbf{\tilde{u}}}^{n}+e_{\mathbf{\tilde{u}}}^{n-1}), e_{p}^{n+1}-e_{p}^{n})
			+(\nabla \cdot (3\theta_{\mathbf{\tilde{u}}}^{n+1}-4\theta_{\mathbf{\tilde{u}}}^{n}+\theta_{\mathbf{\tilde{u}}}^{n-1}), e_{p}^{n+1}-e_{p}^{n}) \nonumber \\
			+\frac{2\tau}{3}(\nabla(3e_{p}^{n+1}-7e_{p}^{n}+5e_{p}^{n-1}-e_{p}^{n-2}),\nabla (e_{p}^{n+1}-e_{p}^{n})) \\
			+\frac{2\tau}{3}(\nabla(3\theta_{p}^{n+1}-7\theta_{p}^{n}+5\theta_{p}^{n-1}-\theta_{p}^{n-2}),\nabla (e_{p}^{n+1}-e_{p}^{n}))
			=0.
		\end{align}
		Combining all results, the following equality holds
		\begin{align}\label{rom_ea_pf_8}
			&\frac{1}{4\tau}(\|e_{\mathbf{\tilde{u}}}^{n+1}\|_{0}^2-\|e_{\mathbf{\tilde{u}}}^{n}\|_{0}^2
			+\|2e_{\mathbf{\tilde{u}}}^{n+1}-e_{\mathbf{\tilde{u}}}^{n}\|_{0}^2-\|2e_{\mathbf{\tilde{u}}}^{n}-e_{\mathbf{\tilde{u}}}^{n-1}\|_{0}^2
			+\|e_{\mathbf{\tilde{u}}}^{n+1}-2e_{\mathbf{\tilde{u}}}^{n}+e_{\mathbf{\tilde{u}}}^{n-1}\|_{0}^2)
			+\nu \|\nabla e_{\mathbf{\tilde{u}}}^{n+1}\|_{0}^2  \nonumber \\
			&+\frac{\tau}{3}(\|\nabla e_{p}^{n+1}\|_{0}^2-\|\nabla e_{p}^{n}\|_{0}^2+\|\nabla (e_{p}^{n+1}-e_{p}^{n})\|_{0}^2) \nonumber\\
			=&-(\partial^2\theta_{\mathbf{\tilde{u}}}^{n+1},e_{\mathbf{\tilde{u}}}^{n+1}) -\nu (\nabla \theta_{\mathbf{\tilde{u}}}^{n+1}, \nabla e_{u}^{n+1})
			-\frac{1}{3}(
			\nabla (7\theta_{p}^{n}-5\theta_{p}^{n-1}+\theta_{p}^{n-2}),e_{\mathbf{\tilde{u}}}^{n+1})\\
			&+\frac{1}{3}( \nabla (7e_{p}^{n}-5e_{p}^{n-1}+e_{p}^{n-2}),\theta_{\mathbf{\tilde{u}}}^{n+1})\nonumber
			-\frac{ 2 \tau}{9} (\nabla(\theta_p^{n+1} - \theta_p^n),\nabla (7e^n_p - 5e^{n-1}_p + e^{n-2}_p)) \\
			&+\frac{1}{3}(3e_{\mathbf{\tilde{u}}}^{n+1}-4e_{\mathbf{\tilde{u}}}^{n}+e_{\mathbf{\tilde{u}}}^{n-1},\nabla ( e_{p}^{n+1}-e_{p}^{n}))
			\nonumber 
			+\frac{1}{3}(3\theta_{\mathbf{\tilde{u}}}^{n+1}-4\theta_{\mathbf{\tilde{u}}}^{n}+\theta_{\mathbf{\tilde{u}}}^{n-1},\nabla (e_{p}^{n+1}-e_{p}^{n})) \\
			&-\frac{ 2 \tau}{9} (\nabla(e_p^{n+1} - e_p^n),\nabla (3\theta^{n+1}_p - 7 \theta_p^n + 5\theta^{n-1}_p - \theta^{n-2}_p))\nonumber  :=\sum_{i=1}^{8}A_{i}.
		\end{align}
		Now we estimate the right hand side of \eqref{rom_ea_pf_8}. By using Cauchy-Schwarz inequality, Young inequality, Poincar\'e inequality and Lemma \ref{lemaSaturation}, using the spaces $Y = U_r$ and $Z = \text{span} \{\nabla \psi_1, \ldots,\nabla \psi_r\}$, we have
		\begin{align}\label{rom_ea_pf_9}
			A_1 \leq \frac{\epsilon_1^{-1}}{4} \| \delta_t \theta_{\mathbf{\tilde{u}}}^{n+1} \|_0^2 + C_p^2 \epsilon_1 \| \nabla e_{\mathbf{\tilde{u}}}^{n+1} \|_0^2,
		\end{align}
		
		\begin{align}\label{rom_ea_pf_10}
			A_2 \leq \frac{\epsilon_2^{-1} \nu}{4} \| \nabla \theta_{\mathbf{\tilde{u}}}^{n+1} \|_0^2 + \nu \epsilon_2 \| \nabla e_{\mathbf{\tilde{u}}}^{n+1} \|_0^2,
		\end{align}
		\begin{align}\label{rom_ea_pf_11}
			A_3 \leq \frac{\epsilon_3^{-1}}{12} \| 7 \theta_p^n - 5\theta_p^{n-1} +\theta_p^{n-2} \|_0^2 + \frac{\epsilon_3}{3} \| \nabla e_{\mathbf{\tilde{u}}}^{n+1} \|_0^2,
		\end{align}
		
		\begin{align}\label{rom_ea_pf_12}
			A_4 \leq \frac{8}{3 \tau^2} \| \theta_{\mathbf{\tilde{u}}}^{n+1} \|_0^2 + \tau^2 (\frac14 \| \nabla e_p^n \|_0^2 + \frac13 \| \nabla (e^n_p - e_p^{n-1} )\|_0^2 + \frac{1}{12} \| \nabla (e^{n-2}_p - e_p^{n-1}) \|_0^2),
		\end{align}
		
		\begin{align}\label{rom_ea_pf_13}
			A_5 \leq \frac89 \| \nabla(\theta_p^{n+1} - \theta_p^n)\|_0^2 + \frac{\tau^2}{3}\| \nabla e_p^n \|_0^2 + \frac{4 \tau^2 }{9} \| \nabla(e_p^n - e_p^{n-1}) \|_0^2 + \frac{\tau^2}{9} \| \nabla (e_p^{n-2} - e_p^{n-1}) \|_0^2,
		\end{align}

		\begin{align}\label{rom_ea_pf_14}
			A_6 \leq \frac{\tau \alpha}{3} \| \partial^2 e_{\mathbf{\tilde{u}}}^{n+1} \|_0^2 + \frac{\tau \alpha }{3} \| \nabla (e_p^{n} - e_p^{n-1}) \|_0^2, 
		\end{align}
		
		\begin{align}\label{rom_ea_pf_15}
			A_7 \leq \frac13 \| \partial^2 \theta_{\mathbf{\tilde{u}}}^{n+1} \|_0^2 + \frac{\tau^2}{3} \| \nabla (e_p^{n+1} - e_p^n) \|_0^2,
		\end{align}

		\begin{align}\label{rom_ea_pf_16}
			A_8 \leq  \frac19\| \nabla (3 \theta^{n+1}_p - 7 \theta_p^{n} + 5 \theta_p^{n-1} - \theta_p^{n-2}) \|_0^2 + \frac{\tau^2}{9} \| \nabla (e_p^{n+1} - e_p^n) \|_0^2.
		\end{align}
		
		Finally, using the estimates from \eqref{rom_ea_pf_9} to \eqref{rom_ea_pf_16}, we have 
		
		\begin{align}\label{rom_ea_pf_17}
			&\frac{1}{4\tau}(\|e_{\mathbf{\tilde{u}}}^{n+1}\|_{0}^2-\|e_{\mathbf{\tilde{u}}}^{n}\|_{0}^2
			+\|2e_{\mathbf{\tilde{u}}}^{n+1}-e_{\mathbf{\tilde{u}}}^{n}\|_{0}^2-\|2e_{\mathbf{\tilde{u}}}^{n}-e_{\mathbf{\tilde{u}}}^{n-1}\|_{0}^2)
			+\nu \|\nabla e_{\mathbf{\tilde{u}}}^{n+1}\|_{0}^2  \nonumber \\
			&+\frac{\tau}{3}(\|\nabla e_{p}^{n+1}\|_{0}^2-\|\nabla e_{p}^{n}\|_{0}^2+\|\nabla (e_{p}^{n+1}-e_{p}^{n})\|_{0}^2) \leq \nonumber\\
			& (\frac{\epsilon_1^{-1}}{4} + \frac13) \| \partial^2 \theta_{\mathbf{\tilde{u}}}^{n+1} \|_0^2 + \frac{\epsilon_2^{-1} \nu}{4} \| \nabla \theta_{\mathbf{\tilde{u}}}^{n+1} \|_0^2 + (C_p^2 \epsilon_1 + \nu \epsilon_2 + \frac{\epsilon_3}{3}) \| \nabla e_{\mathbf{\tilde{u}}}^{n+1} \|_0^2 + \frac{\epsilon_3^{-1}}{12} \| 7 \theta_p^n - 5\theta_p^{n-1} + \theta_p^{n-2} \|_0^2 \nonumber \\
			& + \frac{8}{3 \tau^2} \| \theta_{\mathbf{\tilde{u}}}^{n+1} \|_0^2 + (\frac{\tau^2}{4} + \frac{\tau^2}{3}) \| \nabla e_p^n \|_0^2 + (\frac{\tau^2}{3} + \frac{4 \tau^2}{9}) \| \nabla (e_p^n - e_p^{n-1}) \|_0^2 + (\frac{\tau^2}{12}+ \frac{\tau^2}{9}) \| \nabla (e_p^{n-2} - e_p^{n-1}) \|_0^2\nonumber \\
			&+  \frac{\tau \alpha}{3} \| \partial^2 e_{\mathbf{\tilde{u}}}^{n+1} \|_0^2 
			+ (\frac{\tau \alpha + \tau^2}{3} + \frac{\tau^2}{9}) \| \nabla (e^{n+1}_p - e^{n}_p) \|_0^2 + \frac{8}{9} \| \nabla (\theta_p^{n+1} - \theta_p^n) \|_0^2   \nonumber \\
			& + \frac19\| \nabla (3\theta_p^{n+1} - 7\theta_p^n + 5 \theta_p^{n-1} - \theta_p^{n-2}) \|_0^2. 
		\end{align}
		Taking in \eqref{rom_ea_pf_17} $\epsilon_1 = \frac{\nu}{6 C_p^2}$, $\epsilon_2 = \frac{1}{6}$ and $\epsilon_3 = \frac{\nu}{2}$ and  multiplying by $4\tau,$ we obtain:

		\begin{align}\label{rom_ea_pf_18}
			&\|e_{\mathbf{\tilde{u}}}^{n+1}\|_{0}^2-\|e_{\mathbf{\tilde{u}}}^{n}\|_{0}^2
			+\|2e_{\mathbf{\tilde{u}}}^{n+1}-e_{\mathbf{\tilde{u}}}^{n}\|_{0}^2-\|2e_{\mathbf{\tilde{u}}}^{n}-e_{\mathbf{\tilde{u}}}^{n-1}\|_{0}^2 
			+ \| e_{\mathbf{\tilde{u}}}^{n+1} - 2 e_{\mathbf{\tilde{u}}}^n + e_{\mathbf{\tilde{u}}}^{n-1}\|_0^2
			\nonumber \\
			&  
			+\frac{4 \tau \nu}{2} \|\nabla e_{\mathbf{\tilde{u}}}^{n+1}\|_{0}^2+\frac{4 \tau^2}{3}(\|\nabla e_{p}^{n+1}\|_{0}^2-\|\nabla e_{p}^{n}\|_{0}^2+\|\nabla (e_{p}^{n+1}-e_{p}^{n})\|_{0}^2) \leq \nonumber\\
			& 4 \tau(\frac{6 C_p^2}{4 \nu} + \frac13) \| \partial^2 \theta_{\mathbf{\tilde{u}}}^{n+1} \|_0^2 + \frac{ 4 \tau \nu}{24} \| \nabla \theta_{\mathbf{\tilde{u}}}^{n+1} \|_0^2 + \frac{4 \tau\nu}{24} \| 7 \theta_p^n - 5\theta_p^{n-1} + \theta_p^{n-2} \|_0^2 \nonumber \\
			& + \frac{32}{3 \tau} \| \theta_{\mathbf{\tilde{u}}}^{n+1} \|_0^2 + 4 \tau(\frac{\tau^2}{4} + \frac{\tau^2}{3}) \| \nabla e_p^n \|_0^2 + 4 \tau (\frac{\tau^2}{3} + \frac{4 \tau^2}{9}) \| \nabla (e_p^n - e_p^{n-1}) \|_0^2  \nonumber \\
			&+ 4 \tau(\frac{\tau^2}{12}+ \frac{\tau^2}{9}) \| \nabla (e_p^{n-2} - e_p^{n-1}) \|_0^2 
			+ 4\tau(\frac{\tau \alpha + \tau^2}{3} + \frac{\tau^2}{9}) \| \nabla (e^{n+1}_p - e^{n}_p) \|_0^2 \nonumber \\
			&+ 4 \tau \frac{8}{9} \| \nabla (\theta_p^{n+1} - \theta_p^n) \|_0^2 +\frac{ 4 \tau}{9} \| \nabla (3\theta_p^{n+1} - 7\theta_p^n + 5 \theta_p^{n-1} - \theta_p^{n-2}) \|_0^2 + \dfrac{4 \tau^2 \alpha}{3} \| \partial^2 e_{\mathbf{\tilde{u}}}^{n+1} \|_0^2 . 
		\end{align}
		Summing \eqref{rom_ea_pf_18} from $n=2$ to $k\leq N_t$ and following the same procedure as in Lemma 3 of  \cite{JuliaBoscoBDF2}, we have: 
		\begin{align}\label{rom_ea_pf_19}
			&\|e_{\mathbf{\tilde{u}}}^{k+1}\| + \frac{4\tau \nu}{2} \sum_{n=2}^k \| \nabla e_{\mathbf{\tilde{u}}}^{n+1} \|_0^2 + \frac{4\tau^2}{3} \| \nabla e_p^{k+1} \|  + \dfrac{4\tau^2(1 - \frac{17\tau}{4} - \alpha)}{3} \| \nabla (e_p^{n+1} - e_p^n) \|_0^2 \leq  \nonumber\\
			&7\|e_{\mathbf{\tilde{u}}}^2\|_0^2 + 3\|e_{\mathbf{\tilde{u}}}^1\|_0^2  + \frac{4 \tau^2}{3} \sum_{i=0}^{2}\| \nabla e_p^{i}\|_0^2 + 4 \tau (\frac{\tau^2}{4} + \frac{\tau^2}{3}) \sum_{n=2}^k \| \nabla e_p^n \|_0^2  + \dfrac{4 \tau^2 \alpha}{3} \sum_{n=2}^k \| \partial^2 e_{\mathbf{\tilde{u}}}^{n+1} \|_0^2 \nonumber \\
			& +4(\frac{6C_p^2}{4\nu} + \frac13) \sum_{n=2}^k \tau \| \partial^2 \theta_{\mathbf{\tilde{u}}}^{n+1} \|_0^2 + \frac{4\nu}{24} \sum_{n=2}^k \tau \| \nabla \theta_{\mathbf{\tilde{u}}}^{n+1} \|_0^2 + \frac{4\nu}{24} \sum_{n=2}^k \tau \| 7\theta_p^n - 5 \theta_p^{n-1} + \theta_p^{n-2} \|_0^2 \nonumber \\
			& \frac{32}{3\tau^2} \sum_{n=2}^{k} \tau \| \theta_{\mathbf{\tilde{u}}}^{n+1} \|_0^2 + \frac{32}{9} \sum_{n=2}^k \tau \|\nabla(\theta_p^{n+1} - \theta_p^n) \|_0^2 + \frac49 \sum_{n=2}^k \tau \| \nabla (3 \theta_p^{n+1} - 7 \theta_p^n +5 \theta_p^{n-1} - \theta_p^{n-2}) \|_0^2.
		\end{align}

		\textit{Bounding the time derivative of the error.} We estimate the time derivative of the error using the same idea as in \cite[Theorem~2]{GarciaArchillaJohnNovo2025}. First, we consider $\mathbf{v} = \partial^2 e_{\mathbf{\tilde{u}}}^{n+1} $ in \eqref{rom_ea_pf_1}: 
		\begin{equation}\label{rom_g_pf_1}
			\begin{split}
				\| \partial^2 e_{\mathbf{\tilde{u}}}^{n+1} \|_0^2 +  (\partial^2 \theta_{\mathbf{\tilde{u}}}^{n+1},\partial^2 e_{\mathbf{\tilde{u}}}^{n+1})
				+\nu (\nabla e_{\mathbf{\tilde{u}}}^{n+1},\nabla \partial^2 e_{\mathbf{\tilde{u}}}^{n+1})
				+ \nu(\nabla \theta_{\mathbf{\tilde{u}}}^{n+1},\partial^2 e_{\mathbf{\tilde{u}}}^{n+1}) \\
				+\frac{1}{3}(\nabla (7e_{p}^{n}-5e_{p}^{n-1}+e_{p}^{n-2}), \partial^2 e_{\mathbf{\tilde{u}}}^{n+1})
				+\frac{1}{3}(\nabla (7\theta_{p}^{n}-5\theta_{p}^{n-1}+\theta_{p}^{n-2}),\partial^2 e_{\mathbf{\tilde{u}}}^{n+1})=0,
			\end{split}
		\end{equation} 
		Then, by using the same equality as in \eqref{rom_ea_pf_3}, we obtain:
		\begin{align}\label{rom_g_pf_2}
			4 \tau \| \partial^2 e_{\mathbf{\tilde{u}}}^{n+1} \|_0^2 + \nu \| \nabla e_{\mathbf{\tilde{u}}}^{n+1} \|_0^2 + \nu\| \nabla (2e^{n+1}_{\mathbf{\tilde{u}}} - e^{n}_{\mathbf{\tilde{u}}} ) \|_0^2 + \nu\| \nabla (e_{\mathbf{\tilde{u}}}^{n+1} - 2 e_{\mathbf{\tilde{u}}}^n + e_{\mathbf{\tilde{u}}}^{n-1}) \|_0^2 =  \nonumber \\ \nu\| \nabla e_{\mathbf{\tilde{u}}}^n \|_0^2 + \nu\| \nabla (2e^{n}_{\mathbf{\tilde{u}}} - e^{n-1}_{\mathbf{\tilde{u}}} ) \|_0^2 + 4\tau (A, \partial^2 e_{\mathbf{\tilde{u}}}^{n+1}),
		\end{align}
		where 
		\begin{equation*}
			(A, \partial^2 e_{\mathbf{\tilde{u}}}^{n+1}) =  (- \partial^2 \theta_{\mathbf{\tilde{u}}}^{n+1}  - \nu \theta_{\mathbf{\tilde{u}}}^{n+1} - \frac13 \nabla(7e_p^n - 5e_p^{n-1} + e_p^{n-2}) - \frac13 \nabla(7 \theta_p^n - 5\theta_p^{n-1} + \theta_p^{n-2}), \partial^2 e_{\mathbf{\tilde{u}}}^{n+1}).
		\end{equation*}
		
		Now, by applying Cauchy-Schwarz inequality and Young inequality in \eqref{rom_g_pf_2}:
		\begin{align}\label{rom_g_pf_3}
		 2\tau \| \partial^2 e_{\mathbf{\tilde{u}}}^{n+1} \|_0^2 + \nu\| \nabla e_{\mathbf{\tilde{u}}}^{n+1} \|_0^2 + \nu\| \nabla (2e^{n+1}_{\mathbf{\tilde{u}}} - e^{n}_{\mathbf{\tilde{u}}} ) \|_0^2 + \nu\| \nabla (e_{\mathbf{\tilde{u}}}^{n+1} - 2 e_{\mathbf{\tilde{u}}}^n + e_{\mathbf{\tilde{u}}}^{n-1}) \|_0^2 \leq \nonumber \\ \nu \| \nabla e_{\mathbf{\tilde{u}}}^n \|_0^2 + \nu\| \nabla (2e^{n}_{\mathbf{\tilde{u}}} - e^{n-1}_{\mathbf{\tilde{u}}} ) \|_0^2 + 2\tau \| A \|_0^2. 
		\end{align}
		
		Therefore, summing from $n = 2$ to $k \leq N_t$ in \eqref{rom_g_pf_3} and simplifying, using the same procedure as in Lemma 3 of \cite{JuliaBoscoBDF2}:
		\begin{align}\label{rom_g_pf_4}
			 \sum_{n=2}^k 2\tau \| \partial^2 e_{\mathbf{\tilde{u}}}^{n+1} \|_0^2 \leq \nu (7\|\nabla e_{\mathbf{\tilde{u}}}^2\|_0^2 + 3 \| \nabla e_{\mathbf{\tilde{u}}}^1 \|_0^2) + \sum_{n=2}^k 2 \tau \| A \|_0^2.
		\end{align}
		
		Once the temporal derivative of the error is bounded, we use the following estimate: 
		\begin{align*}
		 \tau\| A \|_0^2 \leq \tau \|\partial^2 \theta_{\mathbf{\tilde{u}}}^{n+1} \|_0^2 + \tau \nu \| \nabla \theta_{\mathbf{\tilde{u}}}^{n+1} \|_0^2 + \tau \|\nabla e_p^n \|_0^2 + \dfrac{4\tau}{3} \| \nabla (e_p^n - e_p^{n-1}) \|_0^2  + \\
			\dfrac{\tau}{3} \| \nabla (e_p^{n-1} - e_p^{n-2})\|_0^2 + \dfrac{\tau}{3} \| \nabla (7 \theta_p^{n} - 5 \theta_p^{n-1} + \theta_{p}^{n-2}) \|_0^2,
		\end{align*}
		and introduce \eqref{rom_g_pf_4} into \eqref{rom_ea_pf_19}, assuming that $\tau < \dfrac{4(1 - \frac{8}{3}\alpha)}{17}$:
		
		\begin{align}\label{rom_ea_pf_new}
			&\|e_{\mathbf{\tilde{u}}}^{k+1}\| + \frac{4\tau \nu}{2} \sum_{n=2}^k \| \nabla e_{\mathbf{\tilde{u}}}^{n+1} \|_0^2 + \frac{4\tau^2}{3} \| \nabla e_p^{k+1} \|  \leq 7\|e_{\mathbf{\tilde{u}}}^2\|_0^2 + 3\|e_{\mathbf{\tilde{u}}}^1\|_0^2  + \frac{4 \tau^2}{3} \sum_{i=0}^{2}\| \nabla e_p^{i}\|_0^2  \nonumber\\
			& +\dfrac{2\nu \tau \alpha }{3} (7\| \nabla e_{\mathbf{\tilde{u}}}^2\|_0^2 + 3 \| \nabla e_{\mathbf{\tilde{u}}}^1\|_0^2) + 4 \tau (\frac{\tau^2}{4} + \frac{\tau^2}{3} + \frac{\tau \alpha}{3}) \sum_{n=2}^k \| \nabla e_p^n \|_0^2 +4(\frac{6C_p^2}{4\nu} + \frac13 + \dfrac{\tau \alpha}{3}) \sum_{n=2}^k \tau \| \partial^2 \theta_{\mathbf{\tilde{u}}}^{n+1} \|_0^2  \nonumber \\
			& + \frac{4\nu(1 + \dfrac{\tau \alpha}{3})}{24} \sum_{n=2}^k \tau \| \nabla \theta_{\mathbf{\tilde{u}}}^{n+1} \|_0^2 + \frac{4\nu}{24} \sum_{n=2}^k \tau \| 7\theta_p^n - 5 \theta_p^{n-1} + \theta_p^{n-2} \|_0^2+  \frac{32}{3\tau^2} \sum_{n=2}^{k} \tau \| \theta_{\mathbf{\tilde{u}}}^{n+1} \|_0^2 \nonumber \\
			& + \frac{32}{9} \sum_{n=2}^k \tau \|\nabla(\theta_p^{n+1} - \theta_p^n) \|_0^2 + (\frac49 + \frac{4\tau \alpha}{9}) \sum_{n=2}^k \tau \| \nabla (3 \theta_p^{n+1} - 7 \theta_p^n +5 \theta_p^{n-1} - \theta_p^{n-2}) \|_0^2  =  7\|e_{\mathbf{\tilde{u}}}^2\|_0^2 + 3\|e_{\mathbf{\tilde{u}}}^1\|_0^2 \nonumber \\ 
			&  + \frac{4 \tau^2}{3} \sum_{i=0}^{2}\| \nabla e_p^{i}\|_0^2 +\dfrac{2\nu \tau \alpha }{3} (7\| \nabla e_{\mathbf{\tilde{u}}}^2\|_0^2 + 3 \| \nabla e_{\mathbf{\tilde{u}}}^1\|_0^2) +  \dfrac{7\tau + 4\alpha}{4}\sum_{n=2}^k \dfrac{4\tau^2}{3}\| \nabla e_p^n \|_0^2 +  \sum_{i=1}^6 B_i. 
		\end{align}
		Now, by applying Gronwall's lemma  (see for instance \cite{HeywoodRannacherSINUM90}, Lemma 5.1) in \eqref{rom_ea_pf_19} we get:
		
		\begin{align}\label{rom_ea_pf_20}
			&\max_{2< k \leq N_t+1} \| e_{\mathbf{\tilde{u}}}^k\|_0^2 + \frac{4 \tau \nu}{2} \sum_{n=2}^{N_t} \| \nabla e_{\mathbf{\tilde{u}}}^{n+1} \|_0^2 + \max_{2< k \leq N_t+1} \frac{4\tau^2}{3} \| \nabla e_p^{k} \|_0^2  \nonumber \\
			& \leq e^{\frac{ 7\tau + 4 \alpha }{2} T}(7 \|e_{\mathbf{\tilde{u}}}^2 \|_0^2 + 3 \|e_{\mathbf{\tilde{u}}}^1 \|_0^2 + C(\nu,\alpha)(7 \| \nabla e_{\mathbf{\tilde{u}}}^2\|_0^2 + 3 \| \nabla e_{\mathbf{\tilde{u}}}^1\|_0^2) +  \frac{4 \tau^2}{3} \sum_{i=0}^{2}\| \nabla e_p^i \|_0^2 + 2 \sum_{i=1}^6 B_i^N).
		\end{align}
		
		Therefore, using POD projection error estimates, we can bound the terms $\{B_i^{N}\}_{i=1}^6$ on the right-hand side of \eqref{rom_ea_pf_20} obtaining:
		
		\begin{align}\label{rom_ea_pf_21}
			&\max_{2< k \leq N_t+1} \| e_{\mathbf{\tilde{u}}}^k\|_0^2 + \frac{4 \tau \nu}{2} \sum_{n=2}^{N_t} \| \nabla e_{\mathbf{\tilde{u}}}^{n+1} \|_0^2 + \max_{2< k \leq N_t+1} \frac{4\tau^2}{3} \| \nabla e_p^{k} \|_0^2  \nonumber \\
			& \leq e^{\frac{ 7\tau  + 4 \alpha}{2} T}(7 \|e_{\mathbf{\tilde{u}}}^2 \|_0^2 + 3 \|e_{\mathbf{\tilde{u}}}^1 \|_0^2 + \frac{4 \tau^2}{3} \sum_{i=0}^{2}\| \nabla e_p^i \|_0^2 +  C(\nu,\alpha)(7 \| \nabla e_{\mathbf{\tilde{u}}}^2\|_0^2 + 3 \| \nabla e_{\mathbf{\tilde{u}}}^1\|_0^2) + \nonumber \\
			& C(\alpha)(\sum_{i=N_{\tilde{u}} +1}^{M_{\tilde{u}}} \tilde{\lambda}_i(\alpha \nu \tau^{-2} \| \bm{\tilde{\varphi}}_i\|_0^2) + \|\nabla \bm{\tilde{\varphi}}_i\|_0^2) +  \sum_{i=N_p+1}^{M_p} \sigma_i( \nu\| \psi_i \|_0^2 +  \| \nabla \psi_i \|_0^2 ))).
		\end{align}
		
		Finally, taking the ROM initial conditions as the POD projection of the FOM initial solutions, then \eqref{rom_ea_pf_21} reads:
		
		\begin{align}\label{rom_ea_pf_22}
			&\max_{2< k \leq N_t+1} \| e_{\mathbf{\tilde{u}}}^k\|_0^2 + \frac{4 \tau \nu}{2} \sum_{n=2}^{N_t} \| \nabla e_{\mathbf{\tilde{u}}}^{n+1} \|_0^2 + \max_{2< k \leq N_t+1} \frac{4\tau^2}{3} \| \nabla e_p^{k} \|_0^2  \nonumber \\
			& \leq C(\alpha) e^{\frac{ 7\tau  + 4 \alpha}{2} T}(
			\sum_{i=N_{\tilde{u}} +1}^{M_{\tilde{u}}} \tilde{\lambda}_i(\nu \tau^{-2} \| \bm{\tilde{\varphi}}_i\|_0^2) + \|\nabla \bm{\tilde{\varphi}}_i\|_0^2) + \sum_{i=N_p+1}^{M_p} \sigma_i( \nu \| \psi_i \|_0^2 + \| \nabla \psi_i \|_0^2 )).
		\end{align}
		Finally, using the following inequality:
		\begin{align}\label{rom_ea_pf_23}
			& \sum_{n=2}^{N_t} \tau \| e_{\mathbf{\tilde{u}}}^{n+1} \|^2 \leq T \max_{2 \leq k \leq N_t} \|e_{\mathbf{\tilde{u}}}^{k+1} \|^2, \nonumber \\
			&\sum_{n=2}^{N_t} \tau \| e_p^{n+1} \|^2 \leq T \max_{2 \leq k \leq N_t} \|e_p^{k+1} \|^2,
		\end{align}
		and the triangle inequality, we reach the error bound \eqref{rom_ea_th}.
	\end{proof}

	\begin{remark}
		We can note that we have a factor of $\tau^2$ in front of the pressure term in the stability \eqref{rom_st_1} and the POD truncation error estimation \eqref{rom_ea_th}, this can be relaxed as suggested in \cite{Azaiez2025}, Section 4.2.1.
	\end{remark}
	\newpage

	\section{The Numerical Results}
	In this section, we present some numerical results for the BDF2 time-splitting scheme introduced and analyzed in the previous section. In order to evaluate the accuracy of the full-order discretization and the performance of the ROM, we show three numerical experiments with Dirichlet boundary conditions. 
	
	First, we verify the precision of the numerical scheme by using a  solution with relatively high regularity. Next, we consider a Stokes problem in which the forcing term is chosen so that the solution exhibits reduced regularity. Although no exact solution is available in this case, this test allows us to evaluate the behaviour and robustness of the ROM under low-regularity conditions. Finally, we demonstrate some reliable results through a classical benchmark text, i.e. the lid driven cavity flow. 
    
	\subsection{Stokes tests}
    The next two tests have the same configuration. We have considered $\Omega = [0,1] \times [0,1] \subset \mathbb{R}^2$ as computational domain, the time interval is [0,T], where $T = 1,$ and the viscosity coefficient is $\nu = 1.$ The numerical method used for solving the problem is the one described in Section \eqref{FullyDiscrete}, moreover the spatial discretization used is $\mathbb{P}^2-\mathbb{P}^1$ for the pair velocity-pressure on a relatively coarse computational grid, for which we consider a uniform partition of the cavity on $100^2$ cells.
	\subsubsection{Accuracy Test}
	The accuracy test comes from a preescribed solution given in \cite{li2022efficient}:
	\begin{equation}\label{ExactSolution}
		\left\{ \begin{array}{ccl}
			u(x,y,t)  & = & \cos(t) \pi \sin(\pi x)^2 \sin(2\pi y),  \\
			v(x,y,t)  & = & \cos(t) \pi \sin(\pi x)^2 \sin(2\pi y), \\
			p(x,y,t) & = &10\cos(t)\cos(\pi x) \cos(\pi y).
		\end{array} \right. 
	\end{equation}
	 In table \eqref{tab1} we present the $L^2$ error between the exact solution and the FE solution with different time step size in order to demonstrate the second-order accuracy of the scheme. The table also displays the convergence rates, confirming that the suggested scheme achieves the theoretical accuracy. 
	\begin{table}[!h]\small
		\centering
		\caption{Time convergence of the scheme for the Stokes problem with the exact solution \eqref{ExactSolution}.}
		\begin{tabular}{c c c c c c c}
			\hline
			$\tau$
			&$\|u_{exact} - u_h\|_{L^2}$
			&$Rate$
			&$\|p_{exact} - p_h\|_{L^2}$
			&$Rate$\\
			\hline
			$1/20$          &1.35e-02  &            &4.27e-01    &        \\
			$1/40$          &3.69e-03  &1.874      &1.20e-01    &1.8300  \\
			$1/80$          &9.42e-04 &1.9705      &3.11e-02    &1.9459  \\
			$1/160$          &2.37e-04 &1.9915      &7.89e-03    &1.9812 \\
			\hline
		\end{tabular}
		\label{tab1}
	\end{table}
	
	\subsubsection{POD Reduced Order Model }
	Now, we will demonstrate the effectiveness and efficiency of the ROM described in Section \eqref{ROMScheme}  for the unsteady Stokes problem with the following forcing term
    \begin{equation}
        f(x,y;t) = \left( \begin{array}{c}
              \sqrt{(x-t)^2 + y^2} \\
              \sqrt{x^2(y-t)^2}
        \end{array} \right).
    \end{equation}
    In particular, the forcing term is designed to exhibit low regularity, allowing us to assess the robustness of the ROM in non-smooth regimes. The numerical method to get the snapshots is the previous one, using $\tau = 10^{-2}$ in the FOM simulations. For the ROM, we collect $M = 81$ snapshots for each unknown field, by storing every fourth FOM simulations in the time interval $[0.21, 1],$ in order to avoid numerical inestabilities at the beginning of the simulation. The POD modes are generated in $L^2-$norm for velocity and pressure. In the following, we will analyze the discrete relative error between the FE solution $s_h$ and the ROM solution $s_r$ in the $\ell^2(L^2)-$norm, defined as:
	\begin{equation}
		\dfrac{\|s_h - s_r\|_{\ell^2(L^2)}}{\| s_h \|_{\ell^2(L^2)}} = \sqrt{\dfrac{\sum_{i=1}^M \| s_h^i - s_r^i\|_0^2}{\sum_{i=1}^M \| s_h^i \|_0^2}}.
	\end{equation}
The decay of normalized POD singular values (left) and neglected energy (right) computed by $100 - 100 \sum_{k=1}^r \sigma_k / \sum_{k=1}^M \sigma_k,$ where $\sigma_k$ are the corresponding singular values  and $M$ the rank of the corresponding data set of the problem of each field, are shown in Figure \eqref{fig:eigenvaluesStokes}. Note that with $10$ POD basis we neglect less than $10^{-4}$ of the energy for each field. 
	\begin{figure}[ht!]
		\centering
		\begin{subfigure}[b]{0.4\textwidth}
			\includegraphics[width=\textwidth]{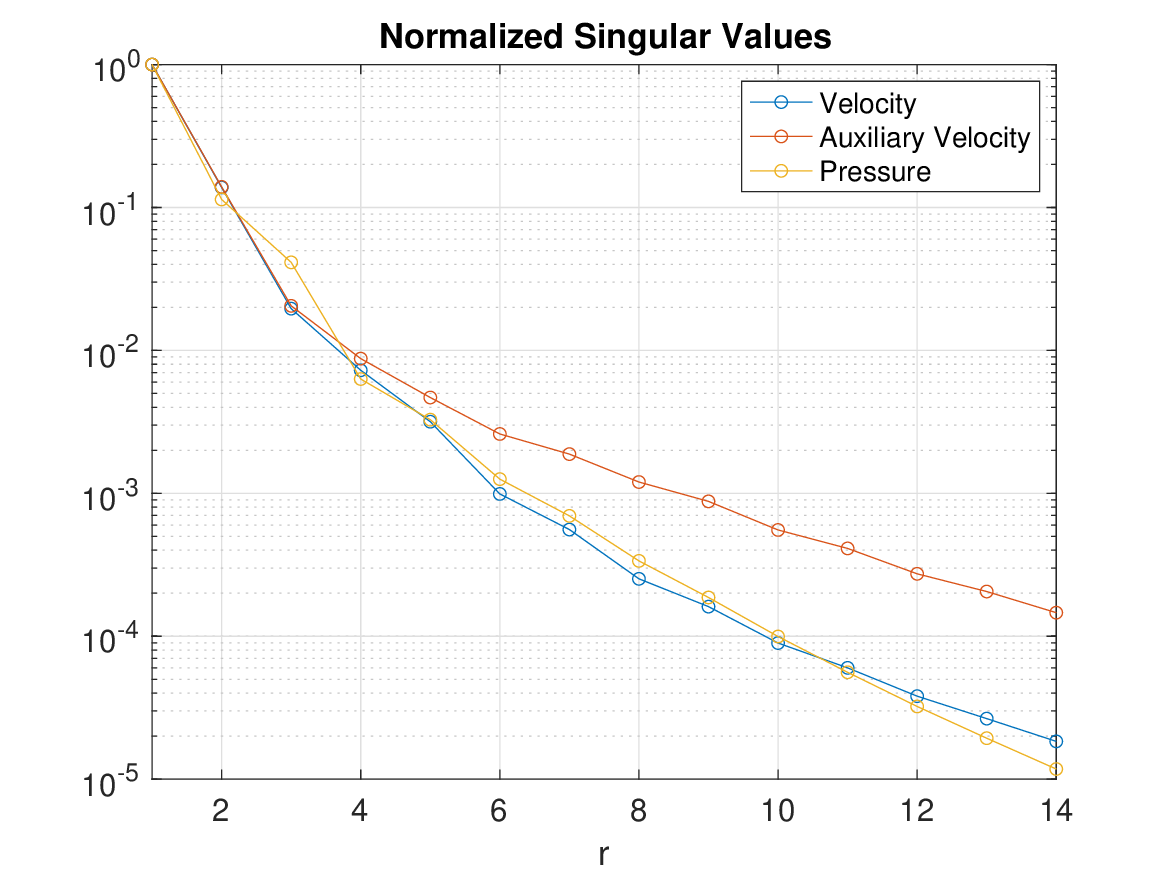}
		\end{subfigure}
		\begin{subfigure}[b]{0.4\textwidth}
			\includegraphics[width=\textwidth]{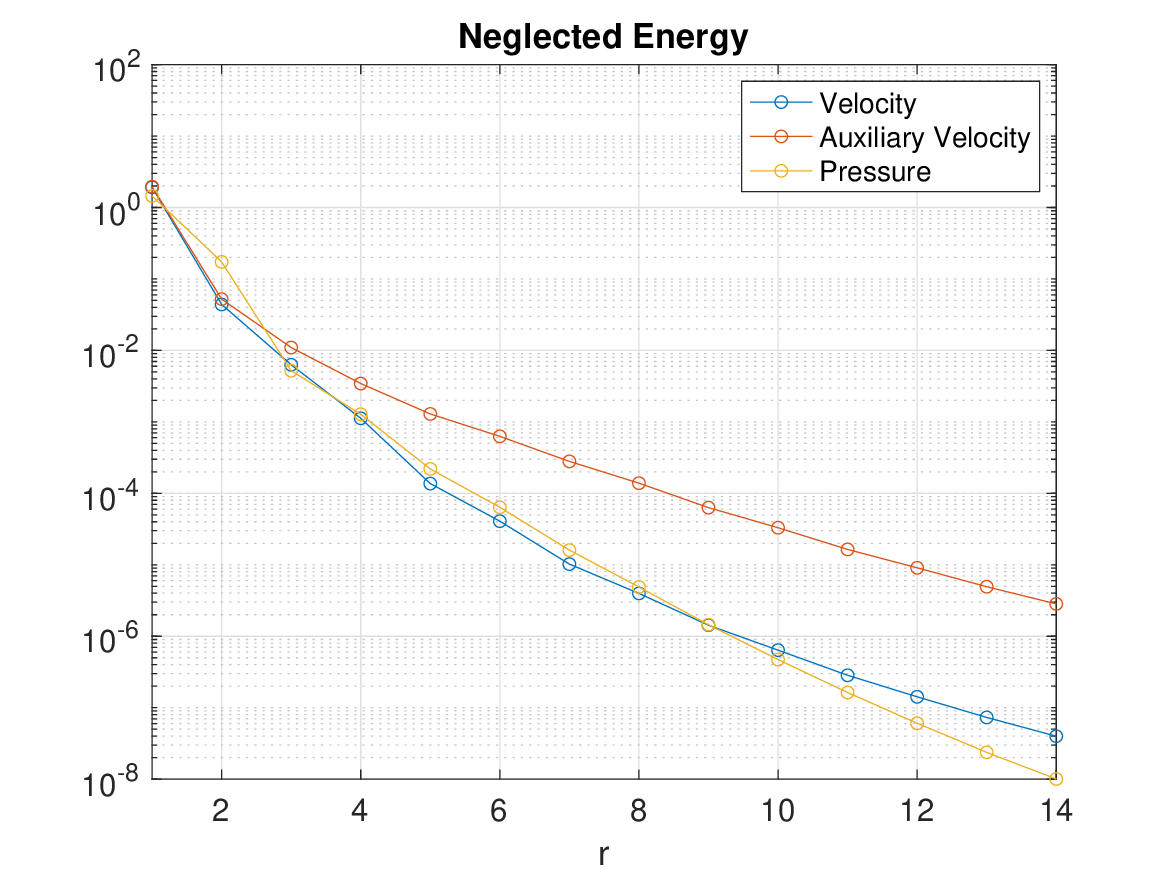}
		\end{subfigure}
		\caption{Decay of the normalized POD singular values (left) and neglected energy (right).}
		\label{fig:eigenvaluesStokes}
	\end{figure}
    
	In Figure \eqref{fig:l2l2errorStokes}, we show the $\ell^2(L^2)$ relative errors for both velocity (left) and pressure (right). 
	For comparison purposes, we also plot the $\ell^2(L^2)$ relative projection errors for both velocity and pressure in the same figure. It is worth noting that this error represents the best achievable error. 
	\begin{figure}[ht!]
		\centering
		\begin{subfigure}[b]{0.4\textwidth}
			\includegraphics[width=\textwidth]{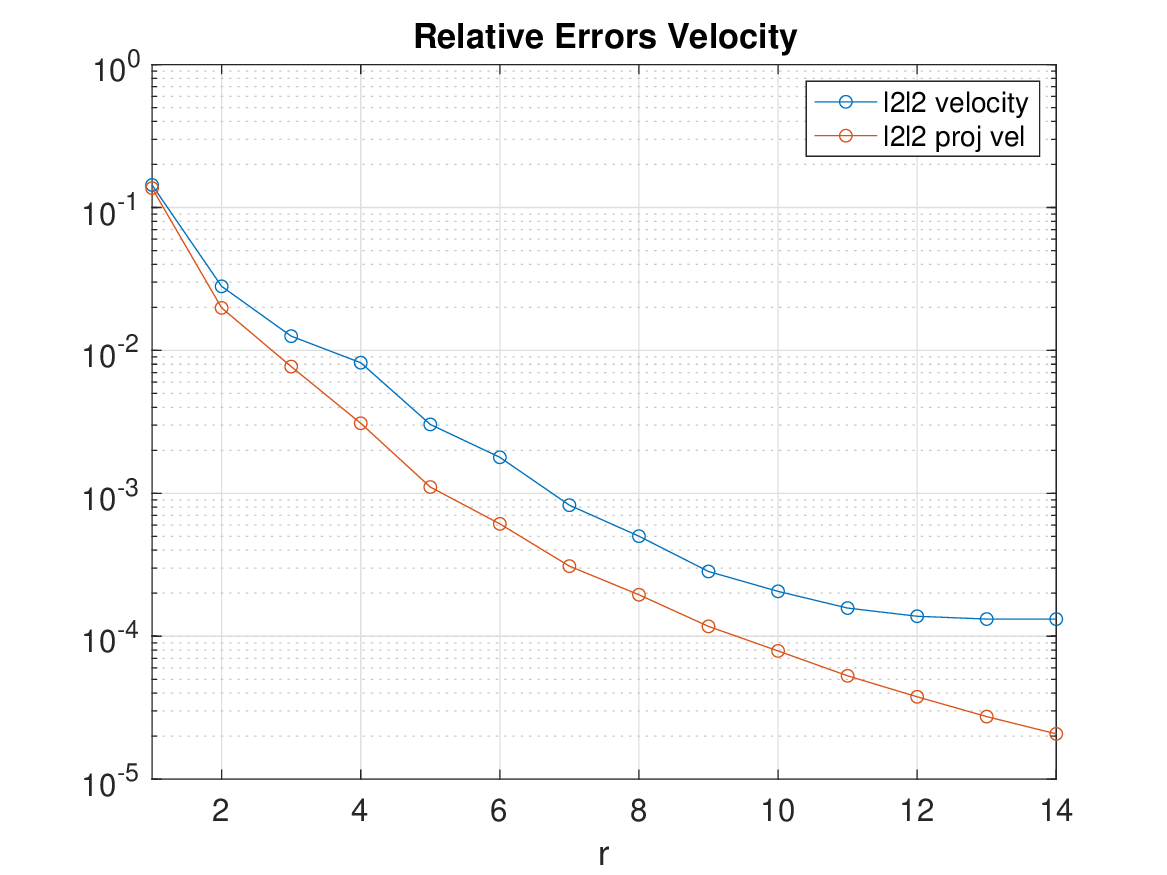}
		\end{subfigure}
		\begin{subfigure}[b]{0.4\textwidth}
			\includegraphics[width=\textwidth]{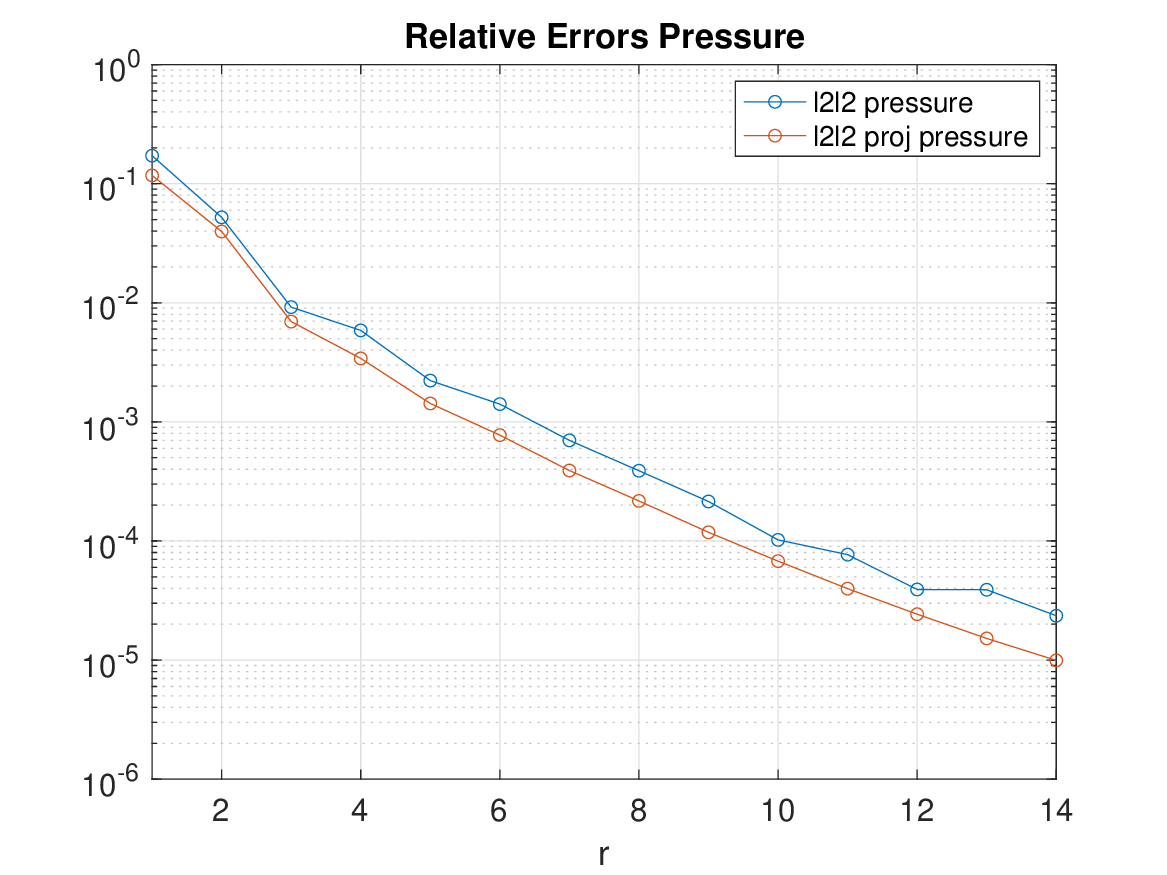}
		\end{subfigure}
		\caption{$\ell^2(L^2)$ relative errors for velocity (left) and pressure (right) depending on the number of POD modes $r$.}
		\label{fig:l2l2errorStokes}
	\end{figure}

	We can also observe in Figure \eqref{fig:l2l2errorStokes} that for the velocity case, using 14 basis functions, the relative error is around $10^{-4}$, despite the low-regularity induced by the forcing term. Using the same number of POD basis for the pressure, we achieve a lower error.

    Finally, Figure \ref{fig:theoreticalnum} shows a comparison between the theoretical error estimator \eqref{rom_ea_th} and the velocity (left) and pressure (right) error depending on the number of POD modes $r$. 

    \begin{figure}[ht!]
		\centering
		\begin{subfigure}[b]{0.4\textwidth}
			\includegraphics[width=\textwidth]{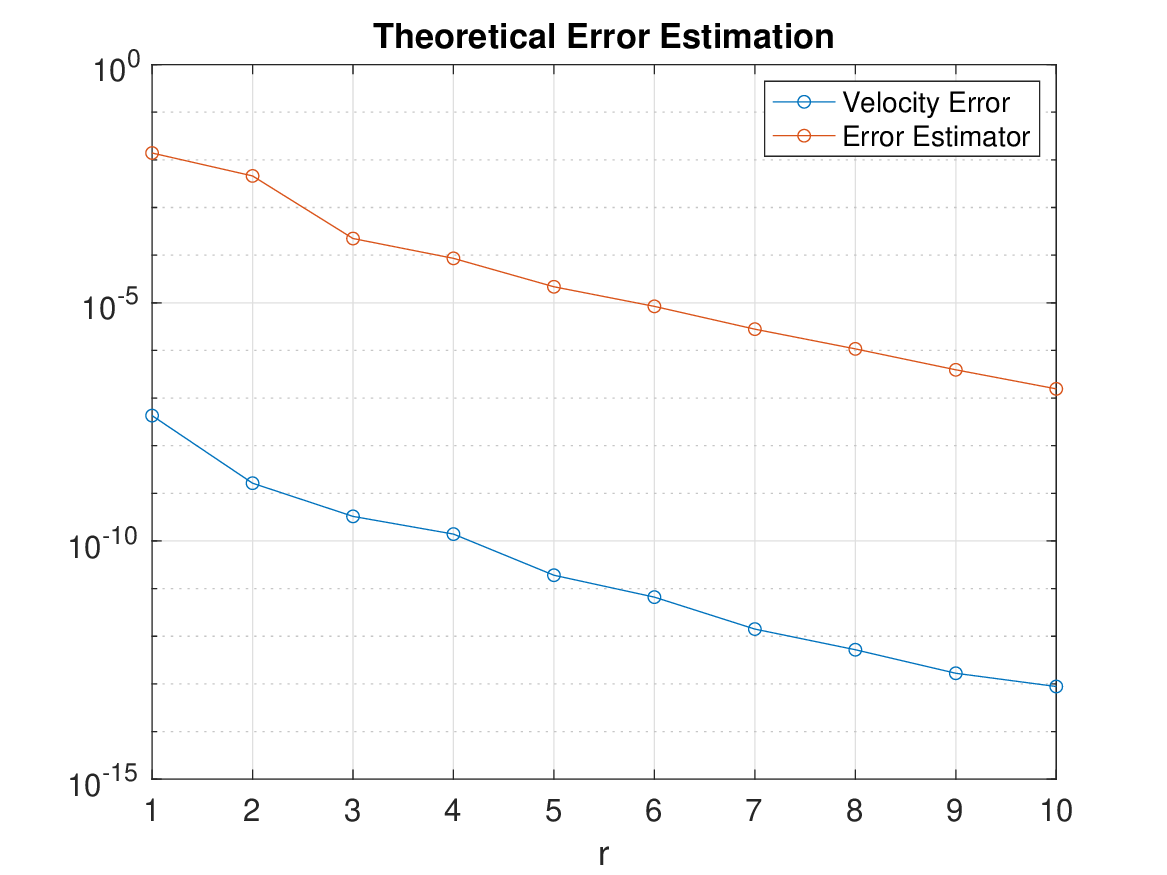}
		\end{subfigure}
		\begin{subfigure}[b]{0.4\textwidth}
			\includegraphics[width=\textwidth]{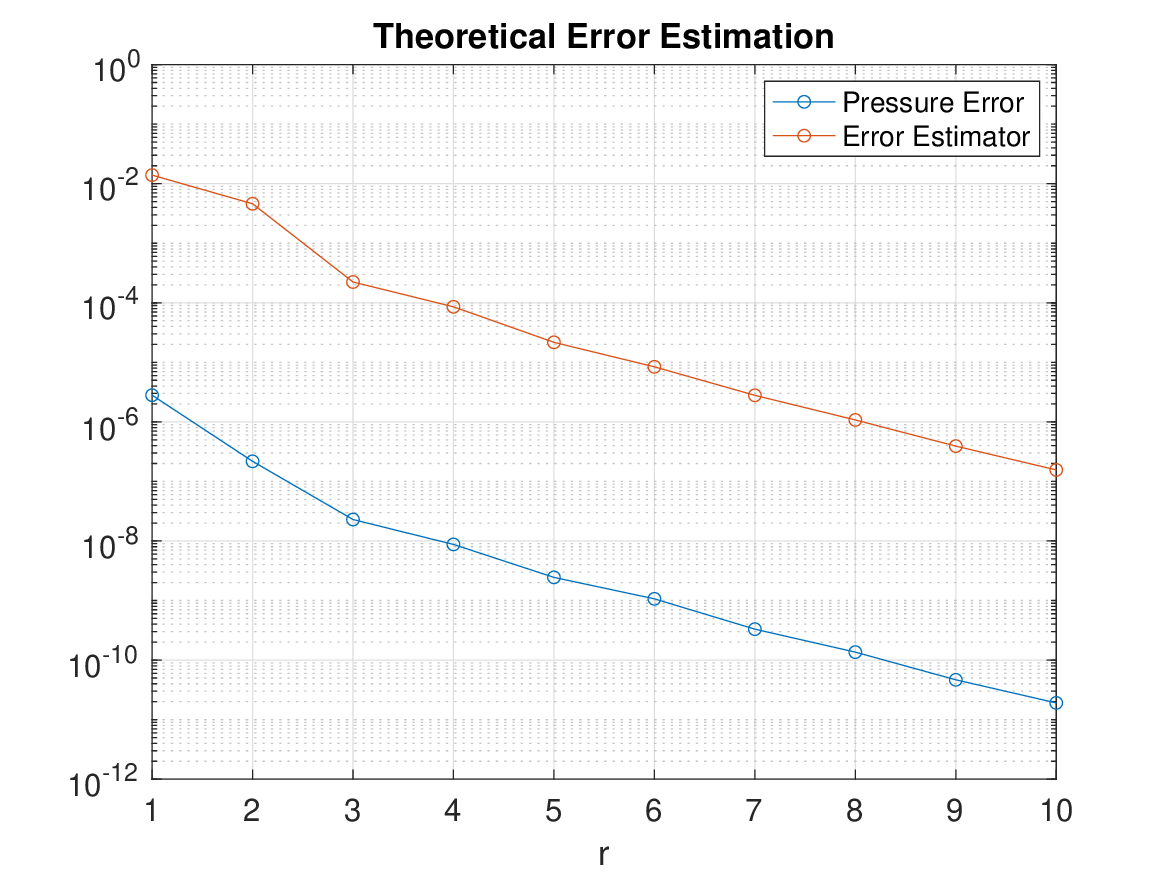}
		\end{subfigure}
		\caption{Comparison between the theoretical error estimator \eqref{rom_ea_th} and the velocity (left) and pressure (right) error depending on the number of modes $r$.}
		\label{fig:theoreticalnum}
	\end{figure}
	
	\subsection{Lid-driven cavity flow}
	The lid-driven cavity problem is a classical benchmark in computational fluid dynamics, widely studied for its rich flow structures and well-defined boundary conditions. This problem involves a square or rectangular domain where the fluid is driven by the motion of the upper boundary (lid) while the remaining walls remain stationary. In this work, we focus on a specific configuration where Dirichlet boundary conditions are imposed: the lid has a prescribed tangential velocity, and the side and bottom walls enforce no-slip conditions. Additionally, the right-hand side term in the governing equations is set to zero, i.e. $f = 0$, corresponding to the absence of external forces.
	
	Through numerical simulation, we analyze the velocity and pressure fields, highlighting the accuracy and stability of our method. The lid-driven cavity setup provides an excellent test case for evaluating the performance of numerical schemes, given its well-documented solutions and the presence of flow features such as vortices and shear layers.
	
	For this test, we have considered $\Omega = [0,1] \times [0,1]$ as computational domain. Additionally, for the ROM, both time and Reynolds number have been considered as parameters, resulting in a multiparametric ROM. The Reynolds number lies within the range $\mathcal{D} = [1000, 5000]$, where all solutions reach a steady-state regime.
	
	\subsubsection{Snapshots Generation for POD construction}
	
	The numerical method used for solving the problem is the one described in Section \eqref{ROMScheme}, moreover the spatial discretization used is $\mathbb{P}^2-\mathbb{P}^1$ FE for the pair velocity-pressure on a mesh that is refined towards the walls in both directions to accurately capture the unknown fields, using the hyperbolic tangent function 
	\cite{haferssas2018efficient, rubino2019efficient}:

	\begin{equation*}
		g(x) = 0.5 \left (1 + \dfrac{\tanh{(2(2x-1))}}{\tanh{(2)}} \right ).
	\end{equation*}
	We have considered a cavity mesh with a partition of $64^2$, which refers to a structured mesh with $64\times 64$ partition of the unit square (computational domain) in both $x,y$ directions.

	In the FOM simulations, an impulsive start is performed, with initial conditions set to zero for both velocity and pressure fields. The time step used is \( \Delta t = 5 \cdot 10^{-3} \) . Time integration is carried out using the incremental projection detailed in Section \eqref{FullyDiscrete}. To handle the convection non-linear Navier–Stokes term  \( (\bm{u} \cdot \nabla) \bm{\tilde{u}} \)  which appears in the momentum equation, we have considered the following extrapolation by means of Newton-Gregory backward polynomials \cite{Cellier} : \( ((2 \bm{u}^n - \bm{u}^{n-1}) \cdot \nabla ) \bm{\tilde{u}}^{n+1}\), $n \geq 1,$ in order to achieve a second-order accuracy in time. 
	
	We have considered a multiparametric ROM, as mentioned above, considering a physical parameter, the Reynolds number $Re$, and the time $t$. For the physical parameter we take 5 sample in a uniformly partition of the range $\mathcal{D} = [1000,5000].$ For the time variable, although we have considered solutions that reach a steady-state regime, we have taken the solution in the interval $[2,3) $, where the steady-state has not yet been achieved. Finally, we collect $M = 200 \cdot 5 = 1000$ snapshots for each unknown field. The POD modes are generated using $L^2-$norm for both velocity and pressure. To determine the dimension of the reduced space, Figure \eqref{fig:eigenvaluesNSE} illustrates the decay of the normalized singular values (left) and the neglected energy (right). From Figure \eqref{fig:eigenvaluesNSE}, we can observe that with $54$ modes, we neglect less than $10^{-6}$ of the energy for both velocity and pressure. 
	
	\begin{figure}[ht!]
		\centering
		\begin{subfigure}[b]{0.4\textwidth}
			\includegraphics[width=\textwidth]{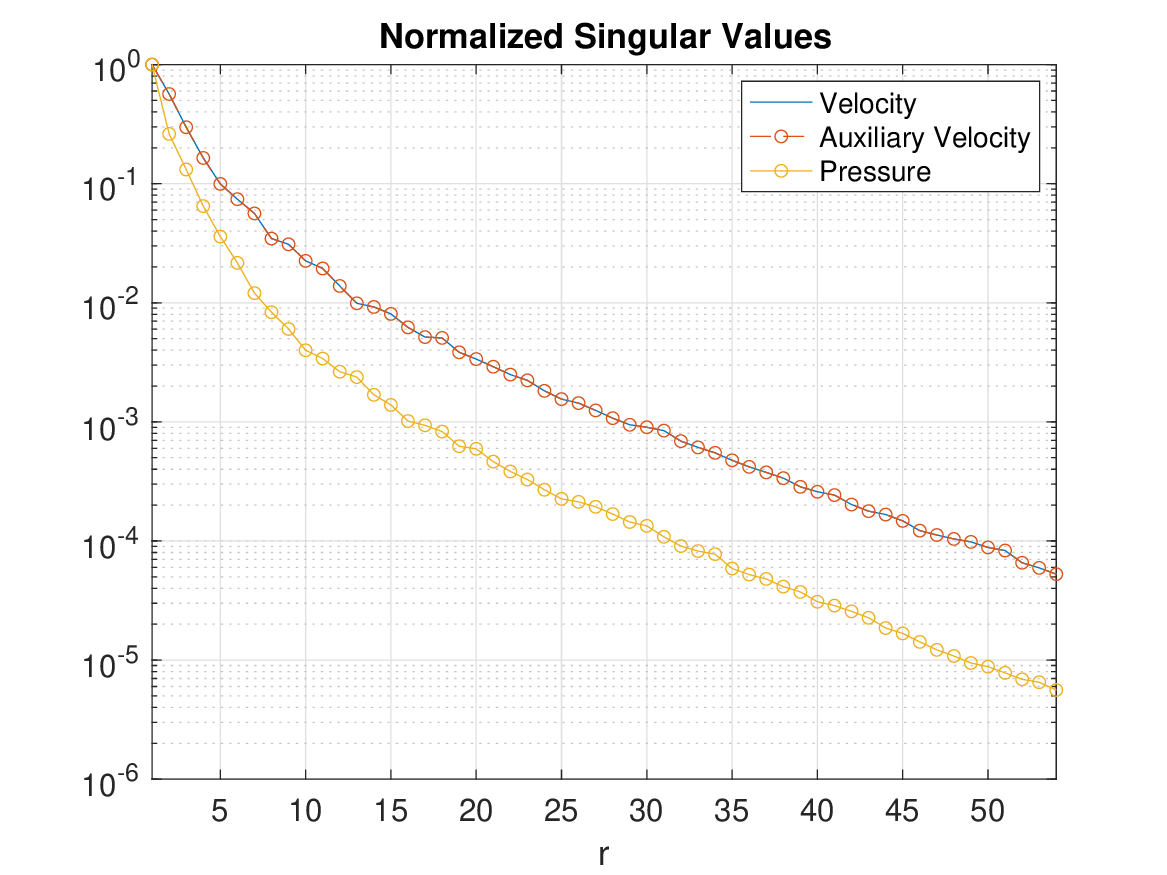}
		\end{subfigure}
		\begin{subfigure}[b]{0.4\textwidth}
			\includegraphics[width=\textwidth]{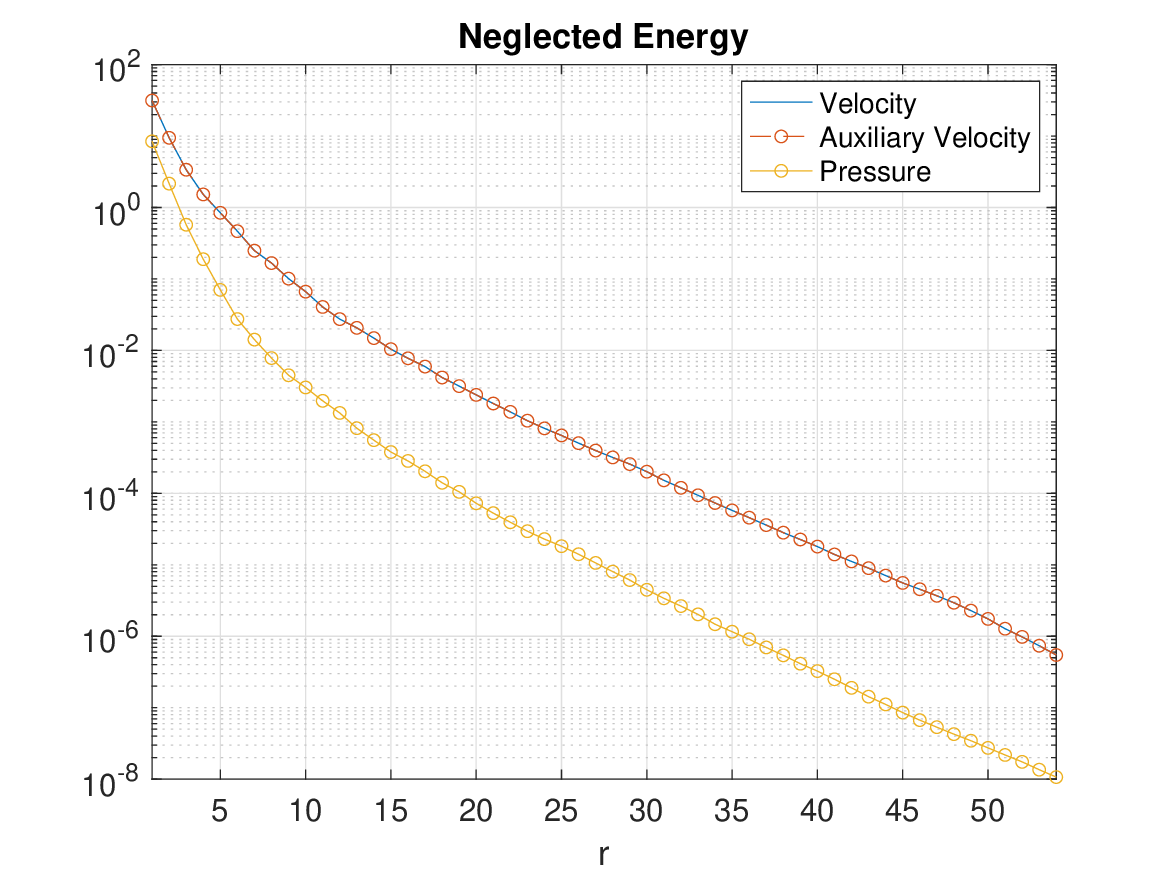}
		\end{subfigure}
		\caption{Decay of the normalized POD singular values (left) and neglected energy (right).}
		\label{fig:eigenvaluesNSE}
	\end{figure}

	\subsubsection{Numerical Results}
	Once the POD modes are generated, the ROM is constructed by using the same time discretization as the FOM. We have evaluated the ROM in the same time interval where the snapshots are taken, for different physical parameters values, and also extrapolating in time, i.e. evaluating the ROM over a longer time window than the one used to generate the data for its construction.
	
	Figure \eqref{fig:l2l2errorNSE} shows the $l^2(L^2)$ relative errors for both velocity (left) and pressure (right) for two different values of the physical parameter set, compared to the $l^2(L^2)$ relativer projection errors \eqref{projerror}, since they represent the best achievable error. 

	\begin{figure}[ht!]
		\centering
		\begin{subfigure}[b]{0.4\textwidth}
			\includegraphics[width=\textwidth]{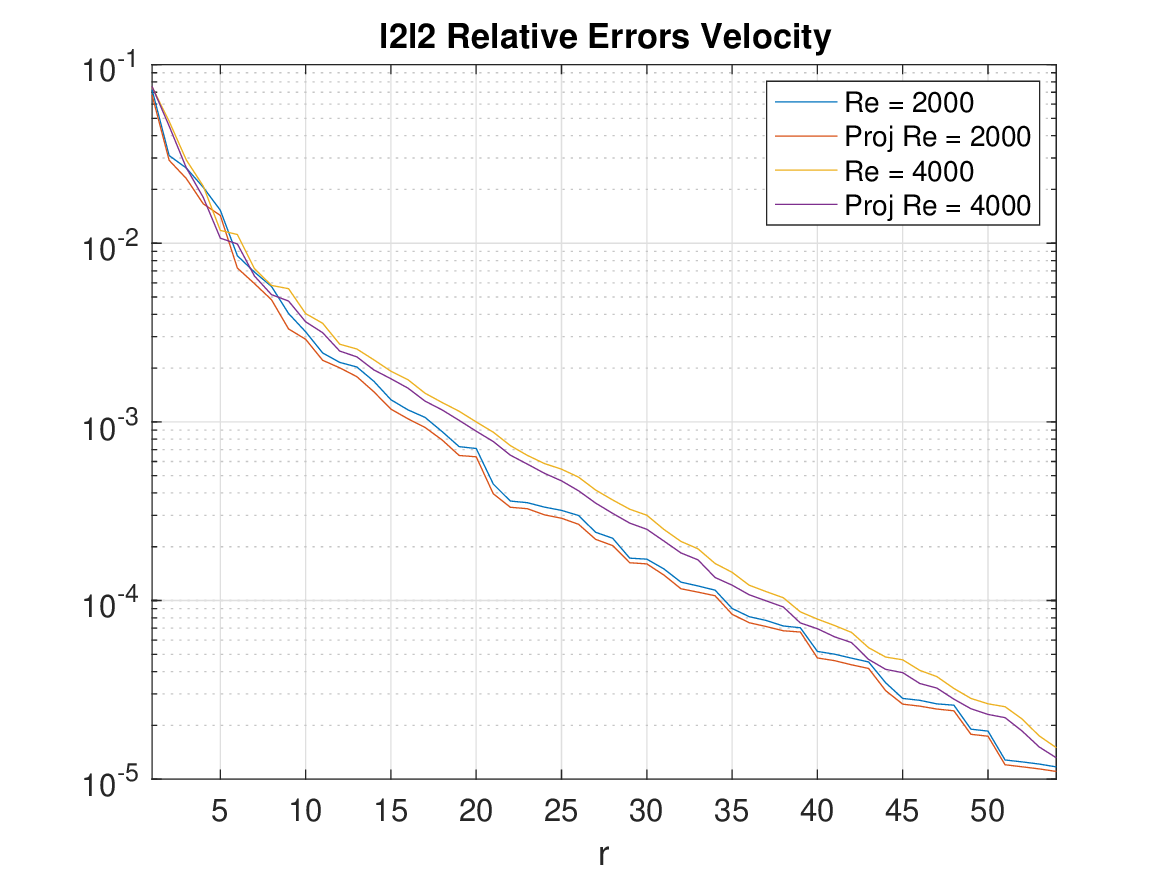}
		\end{subfigure}
		\begin{subfigure}[b]{0.4\textwidth}
			\includegraphics[width=\textwidth]{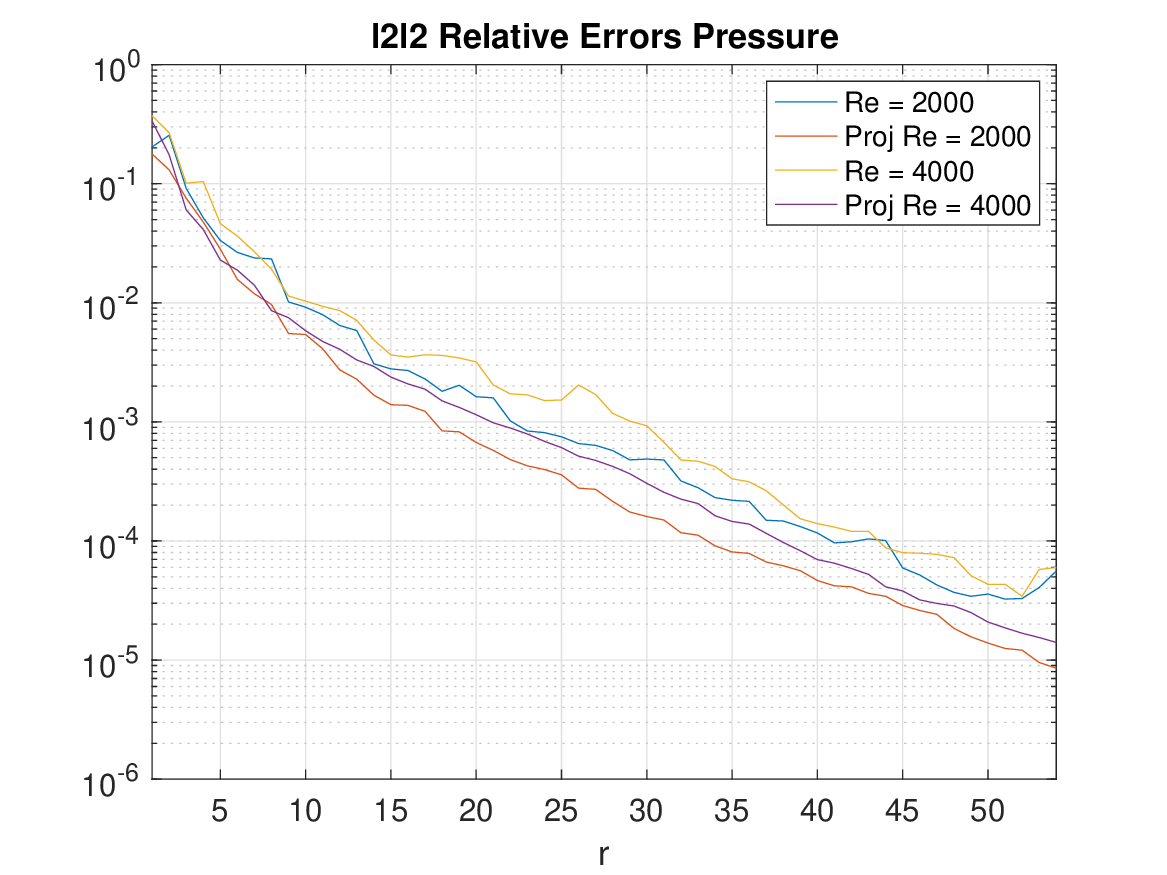}
		\end{subfigure}
		\caption{$\ell^2(L^2)$ relative errors for velocity (left) and pressure (right) depending on the number of POD modes $r.$}
		\label{fig:l2l2errorNSE}
	\end{figure}

	We can see in Figure \eqref{fig:l2l2errorNSE} that the $l^2(L^2)$ error exhibits the same behavior as its projection error, achieving a good precision with few modes.
	
	We also evaluate the performance of the ROM with respect to extrapolation in time. Using two samples of the Reynolds parameter $( Re \in \{ 2000, 4000 \})$, we analyze the ROM over the time interval $(2,4] $, while snapshots are collected in the time interval $(2,3]$. To assess the ROM's performance in this scenario, Figure 8 depicts the temporal evolution of the $L^2$ relative error in space for velocity (left) and pressure (right).

	\begin{figure}[ht!]
		\centering
		\begin{subfigure}[b]{0.4\textwidth}
			\includegraphics[width=\textwidth]{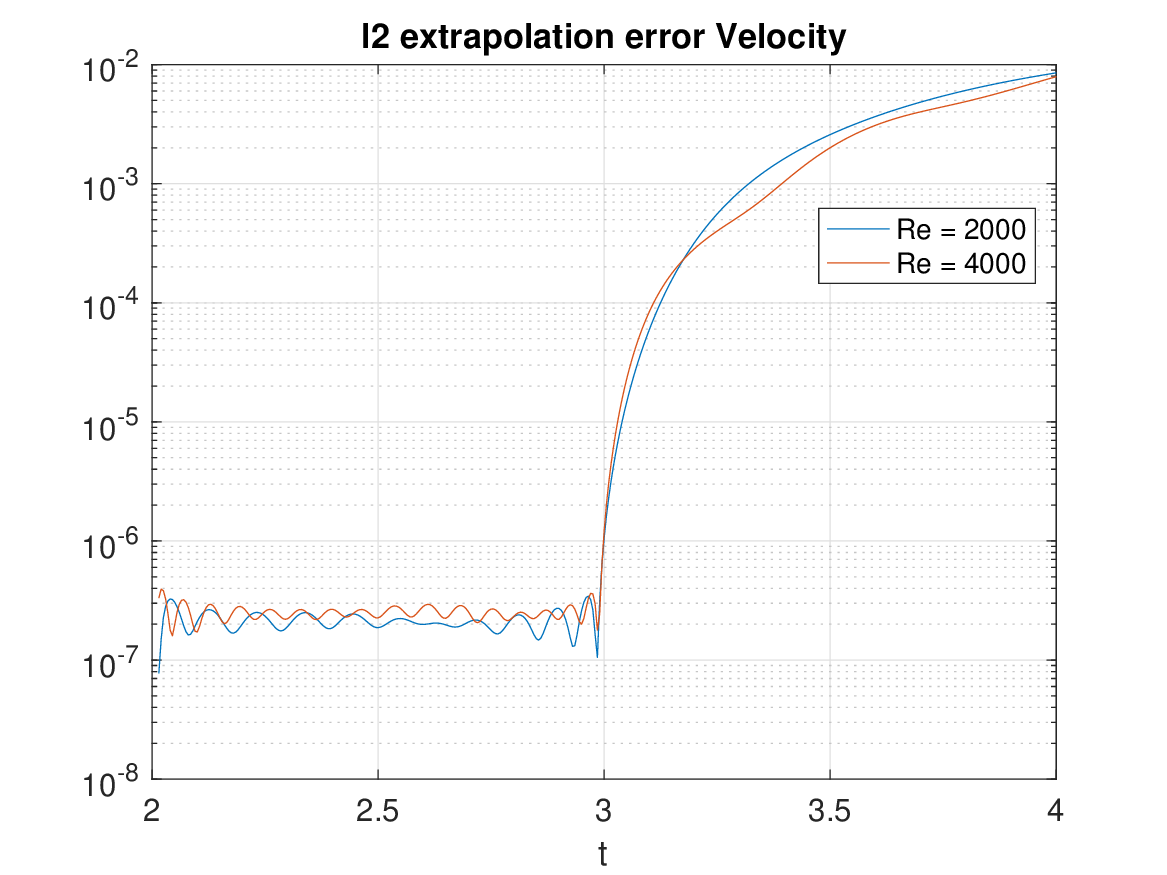}
		\end{subfigure}
		\begin{subfigure}[b]{0.4\textwidth}
			\includegraphics[width=\textwidth]{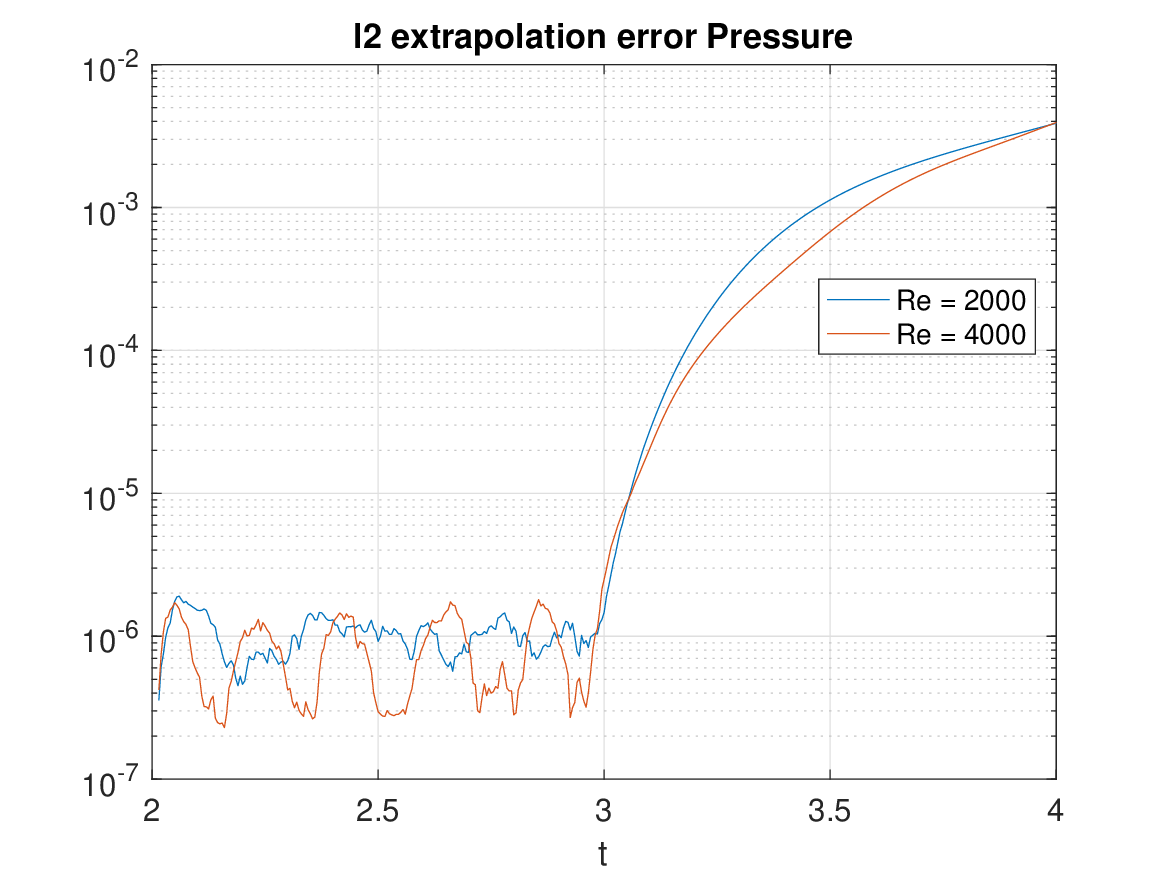}
		\end{subfigure}
		\caption{Temporal evolution of the $L^2$ relative error for velocity (left) and pressure (right) in the time interval $(2,4].$ }
		\label{fig:l2errorextrapolation}
	\end{figure}

	At the same time, to evaluate the performance of the ROM with respect to interpolation on the physical parameter, we select different values from the samples inside the range $\mathcal{D} = [1000,5000].$ Figure \eqref{fig:l2l2parameter} show the $l^2(L^2)$ relative error of velocity and pressure computed in the same time interval as for the snapshots, i.e. $[2,3).$
	
	In Figure \eqref{fig:l2l2parameter}, we can see that the maximum error for both velocity and pressure occurs at the mean value between each sample of the physical parameter, with the highest errors obtained for $Re=1500.$
	\begin{figure}[ht!]
		\centering
		\includegraphics[width=0.5\linewidth]{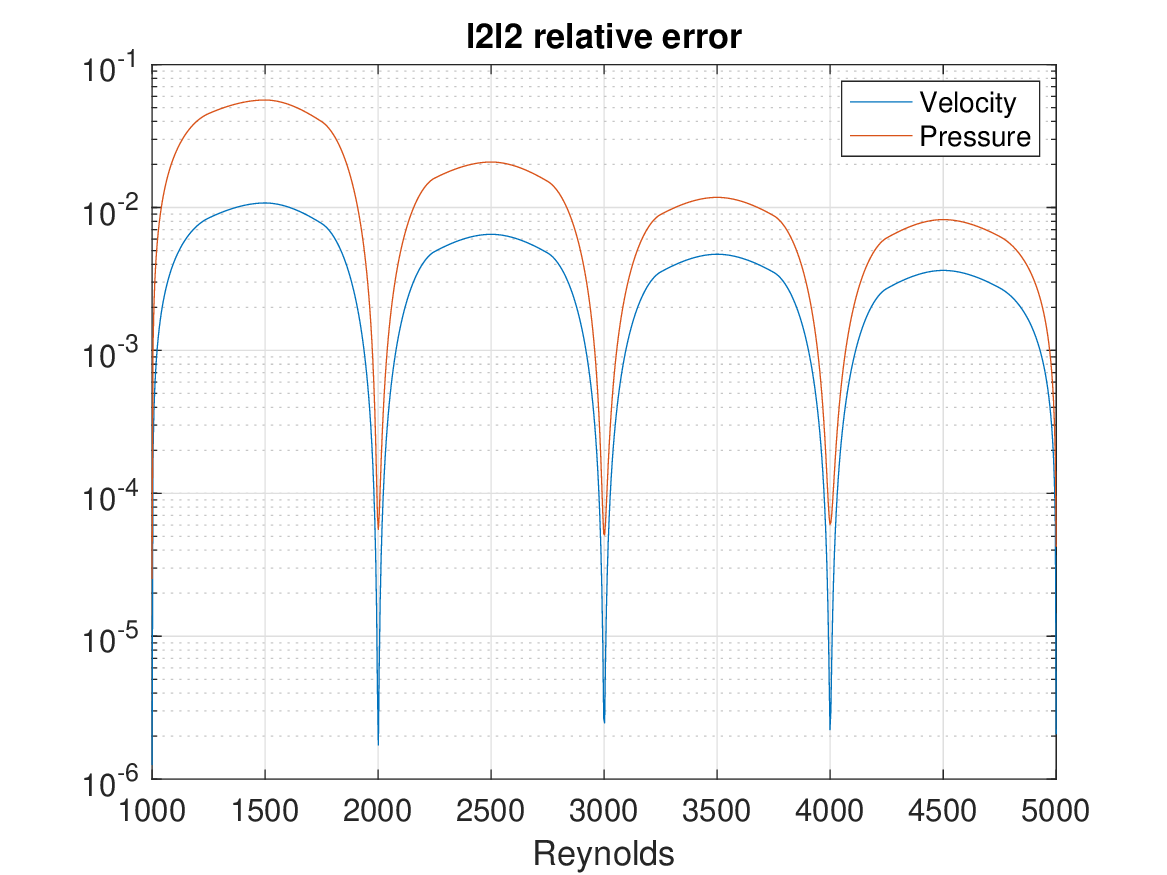}
		\caption{$\ell^2(L^2)$ relative error for velocity and pressure for different Reynolds numbers in the time interval $(2,3].$}
		\label{fig:l2l2parameter}
	\end{figure}
	\section{Conclusion}
	In this work, we have proposed a reduced-order model (ROM) for the incompressible Navier--Stokes equations based on an incremental projection scheme combined with Proper Orthogonal Decomposition (POD) techniques. The model employs BDF2 time discretization and finite element spatial approximation, achieving second-order accuracy in time.
	
	The reduced scheme effectively decouples velocity and pressure computations, while preserving stability and bypassing the inf-sup condition via a stabilized pressure formulation. We provided a detailed stability analysis and established error estimates, demonstrating that the velocity error in the semi-discrete norm is of order $\mathcal{O}(\delta t^2 + h^{l+1})$, and the splitting error remains of order $\mathcal{O}(\delta t^2)$.
	
	Numerical results validate the theoretical analysis. The benchmark Stokes problem confirmed the expected second-order convergence in time. In the POD-ROM simulations, very few modes were sufficient to attain high accuracy.
	
	The classical lid-driven cavity problem served as a further benchmark to evaluate the ROM under multiparametric conditions, including variations in Reynolds number and time. The ROM successfully captured key flow features, demonstrating both accuracy and computational efficiency, even when extrapolating in time and interpolating in parameter space.
	
	Overall, this framework presents a promising approach for reduced-order modeling of incompressible flows. Future research will focus on extending the method to the full nonlinear Navier--Stokes equations, adaptive mode selection strategies, and integration with data-driven or machine learning approaches to enhance ROM capabilities in more complex settings.
	
	\medskip
	
	\noindent {\bf Acknowledgments:} This work has been supported by the Spanish Government Project PID2021-123153OBC21 funded by MCIN/AEI/10.13039/501100011033/FEDER, UE.\\
	
	\clearpage
	\bibliographystyle{plain}
	\bibliography{reference}
	
\end{document}